\documentclass[reqno, 12pt]{amsart}
\pagestyle{plain}
\usepackage[latin1]{inputenc}
\usepackage{amsfonts}
\usepackage{amsmath}
\usepackage{amssymb}
\usepackage{amsthm}
\usepackage{fontenc}
\usepackage{graphicx}
\usepackage[margin=1.45in]{geometry}


\newtheorem{thm}{Theorem}

\newtheorem{prop}[thm]{Proposition}

\newtheorem{cor}[thm]{Corollary}
\newtheorem{df}{Definition}

\title{Equivariant Equipartitions: Ham Sandwich Theorems for Finite Subgroups of Spheres}

\author{Steven Simon}

\begin{document}

\maketitle

\begin{abstract} 

Equivariant ``Ham Sandwich" Theorems are obtained for the finite subgroups $G$ of the unit spheres $S(\mathbb{F})$ in the classical algebras $\mathbb{F}=\mathbb{R},\mathbb{C},$ and $\mathbb{H}$.  Given any $n$ $\mathbb{F}$-valued mass distributions on $\mathbb{F}^n$, it is shown that there exists a $G$-equivariant decomposition of $\mathbb{F}^n$ into $|G|$ regular convex fundamental regions which ``$G$-equipartition" each of the $n$ measures, as realized by the vanishing of the ``$G$-averages" of these regions' measures. Applications for real measures follow, among them that any $n$ signed mass distributions on $\mathbb{R}^{(p-1)n}$ can be  equipartitioned by a single regular $p$-fan when $p$ a prime number.

\end{abstract}

\section{Introduction}

The familiar Ham Sandwich Theorem states that any $n$ mass distributions on $n$-dimensional Euclidian space can be bisected by a single hyperplane. Reinterpreting the theorem as a $\mathbb{Z}_2$-symmetry statement for measures, we provide analogous ``$G$-equipartition" theorems for measures by other finite groups $G$. In each case, we find a natural partition of space into $|G|$ regular convex fundamental regions whose geometric $G$-symmetry are realized as a simultaneous $G$-symmetry of the mass-distributions considered. 
	
\subsection{A $\mathbb{Z}_2$-Reformulation of the Ham Sandwich Theorem}
	
	Let $\mathbb{Z}_2\cong\{\pm1\}$ denote the cyclic group of order two. Each pair $\{(\mathbf{a},b), (-\mathbf{a},-b)\}$ of antipodal points in $S^{n-1} \times \mathbb{R}$ determines a unique pair of halfspaces $H^+:=H^+(\mathbf{a},b)=\{\mathbf{u}\in\mathbb{R}^n\mid \langle\mathbf{u},\mathbf{a}\rangle_{\mathbb{R}}\geq b\}$ and $H^-:=H^-(\mathbf{a},b)=\{\mathbf{u}\mid \langle\mathbf{u},\mathbf{a}\rangle_{\mathbb{R}}\leq b\}$ on which $\mathbb{Z}_2$ acts freely and transitively by the action $\cdot$ generated by  reflection about their common hyperplane $H_{\mathbb{R}}:=H_{\mathbb{R}}(\mathbf{a},b)=\{\mathbf{u}\mid \langle \mathbf{u},\mathbf{a}\rangle_{\mathbb{R}}=b\}$. If $\mu$ is a mass distribution on $\mathbb{R}^n$, i.e., a finite Borel measure on $\mathbb{R}^n$ for which each hyperplane has measure zero (or more generally, a signed mass distribution - see Definition 1 below), the ``$\mathbb{Z}_2$-average" \begin{equation} \mu(H^+) - \mu(H^-) = 1\cdot \mu(H^+\cdot 1) + (-1)\cdot \mu(H^+\cdot (-1))\end{equation} evaluates the $\mathbb{Z}_2$-symmetry of the measures of the $\mathbb{Z}_2$-symmetric half-spaces $\{H^+,H^-\}$ with respect to the free orthogonal $\mathbb{Z}_2$-action $\cdot$ on $\mathbb{R}$ given by restricting multiplication to $\{\pm1\}$. The vanishing of this average can rightly be called a $\mathbb{Z}_2$-equipartition of $\mu$, and the Ham Sandwich Theorem states that any $n$ mass distributions on $\mathbb{R}^n$ can be $\mathbb{Z}_2$-equipartitioned by a single pair of half-spaces. 	

\subsection{ $G$-Ham Sandwich Theorems} 

	As $\{\pm1\}=S^0$ is the unit sphere in $\mathbb{R}$, it is natural to ask for generalizations of the Ham Sandwich Theorem to the finite subgroups of the unit circle $S^1\subseteq\mathbb{C}$ and the 3-sphere $S^3\subseteq\mathbb{H}$, the unit spheres of the other classical (skew-) fields. Owing to the use of complex and quaternionic multiplication, we shall consider the more general complex or quaternion-valued mass distributions of Definition 1 below.

	Let $G$ be a non-zero finite subgroup of $S(\mathbb{F})$, $\mathbb{F}=\mathbb{R},\mathbb{C},$ or $\mathbb{H}$. We will show that $G$ determines a natural family  of (``$G$-Voronoi") partitions $\{\mathcal{R}_g:=\mathcal{R}_g(\mathbf{a},b)\}_{g\in G}$ of $\mathbb{F}^n$ into regular convex fundamental regions parametrized by the $G$-orbits $\{(g\mathbf{a},gb)\}_{g\in G}$ of $S(\mathbb{F}^n)\times\mathbb{F}$. Each collection of regions will be centered about a $\mathbb{F}$-hyperplane $H_{\mathbb{F}}:=H_{\mathbb{F}}(\mathbf{a},b)$, and $G$ will act freely and transitively on each partition by $\mathbb{F}$-linear isometries about $H_{\mathbb{F}}$ - orthogonally when $\mathbb{F}=\mathbb{R}$, unitarily when $\mathbb{F}=\mathbb{C}$, and symplectically when $\mathbb{F}=\mathbb{H}$. On the other hand, the free $\mathbb{F}$-isometric $G$-actions on $\mathbb{F}$ are obtained by setting $g\cdot u=\varphi(g)u$ for each $u\in\mathbb{F}$ and $g\in G$ when $\varphi\in Aut(G)$ (a linear action $\cdot$ is free provided $g\cdot u =u$ iff $g=1$ or $u=0$). Given a $\mathbb{F}$-valued mass distribution $\mu$ on $\mathbb{F}^n$ and a $G$-Voronoi partition $\{\mathcal{R}_g\}_{g\in G}$ of $\mathbb{F}^n$, the ``$(G,\varphi)$-average" \begin{equation} \sum_{g\in G} \varphi(g)^{-1}\mu(\mathcal{R}_g) = \sum_{g\in G} g^{-1}\cdot \mu(\mathcal{R}_1\cdot g)\in\mathbb{F}\end{equation} evaluates the $G$-symmetry of the measures of the fundamental $G$-regions with respect to the $G$-action on $\mathbb{F}$ afforded by the automorphism $\varphi$. We shall therefore say that a $G$-Voronoi partition $(G,\varphi)$-equipartitions $\mu$ if this average is zero.\\
		
	The main result of this paper states that any $n$ $\mathbb{F}$-valued measures on $\mathbb{F}^n$ can be $G$-equipartitioned simultaneously, thereby realizing the $G$-symmetry on $\mathbb{F}^n$ as a corresponding $G$-symmetry of its finite measures. The original Ham Sandwich Theorem is recovered when $\mathbb{F}=\mathbb{R}$, since the $S(\mathbb{R})$-Voronoi partitions of $\mathbb{R}^n$ are the half-space decompositions (see section 2) and the only automorphism of $S(\mathbb{R})$ is the identity map. 

\begin{thm} $G$-Ham Sandwich Theorem\\ 
Let $\mathbb{F}=\mathbb{R},\mathbb{C},$ or $\mathbb{H}$ and let $G$ be a non-zero finite subgroup of $S(\mathbb{F})$. Given any $n$ $\mathbb{F}$-valued mass distributions $\mu_1,\ldots \mu_n$ on $\mathbb{F}^n$ and any $n$ automorphisms $\varphi_1,\ldots,\varphi_n\in Aut(G)$, there exists a $G$-Voronoi partition $\{\mathcal{R}_g\}_{g\in G}$  which $(G,\varphi_i)$-equipartitions each $\mu_i:$ 
\begin{equation} \sum_{g\in G} \varphi_i(g)^{-1}\mu_i(\mathcal{R}_g)=0 \end{equation} for each  $1\leq i \leq n$. \end{thm}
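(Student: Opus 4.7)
\textbf{The plan} is to execute the standard configuration-space/test-map scheme in $G$-equivariant form. Take as configuration space $X:=S(\mathbb{F}^{n+1})$ with the $G$-action $h\cdot(\mathbf{a},b)=(h\mathbf{a},hb)$ inherited from left $\mathbb{F}$-multiplication, view each $(\mathbf{a},b)\in X$ as the parameter of a $G$-Voronoi partition (after a harmless rescaling, with some convention when $\mathbf{a}=0$), and define the test map
\[
F=(F_1,\ldots,F_n)\colon X\longrightarrow \mathbb{F}^n,\qquad F_i(\mathbf{a},b)=\sum_{g\in G}\varphi_i(g)^{-1}\mu_i(\mathcal{R}_g(\mathbf{a},b)).
\]
A zero of $F$ is exactly a simultaneous $(G,\varphi_i)$-equipartition of all the $\mu_i$. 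Because $\mathbb{F}$ is a skew-field and $(\mathbf{a},b)\neq 0$, the conditions $h\mathbf{a}=\mathbf{a}$, $hb=b$ force $h=1$, so $G$ acts freely on $X$; likewise, since each $\varphi_i$ is an automorphism, the coordinatewise action $h\cdot(u_1,\ldots,u_n)=(\varphi_1(h)u_1,\ldots,\varphi_n(h)u_n)$ on $\mathbb{F}^n$ is free off the origin.

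\textbf{Equivariance.} The orbit identity $\{(gh\mathbf{a},ghb)\}_{g\in G}=\{(g'\mathbf{a},g'b)\}_{g'\in G}$ forces $\mathcal{R}_g(h\mathbf{a},hb)=\mathcal{R}_{gh}(\mathbf{a},b)$. Reindexing $g'=gh$ and using $\varphi_i(g'h^{-1})^{-1}=\varphi_i(h)\varphi_i(g')^{-1}$ (valid even in the quaternionic case, since $(ab)^{-1}=b^{-1}a^{-1}$) yields
\[
F_i(h\mathbf{a},hb)=\sum_{g'}\varphi_i(h)\varphi_i(g')^{-1}\mu_i(\mathcal{R}_{g'}(\mathbf{a},b))=\varphi_i(h)\,F_i(\mathbf{a},b),
\]
so $F$ is $G$-equivariant with the $\varphi_i$-twisted action on the target. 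Continuity is routine: $\mathcal{R}_g(\mathbf{a},b)$ depends continuously in symmetric difference on the parameter, and each $\mu_i$ charges no hyperplane.

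\textbf{Topological closure.} If $F$ were nonvanishing, then $F/\|F\|$ would be a $G$-equivariant map between the unit spheres of two free $\mathbb{F}$-linear $G$-representations: the source $\rho^{\oplus(n+1)}$, where $\rho\colon G\hookrightarrow S(\mathbb{F})$ is the tautological representation on $\mathbb{F}$ by left multiplication, of real dimension $(n+1)\dim_{\mathbb{R}}\mathbb{F}-1$; and the target $\bigoplus_{i=1}^{n}\rho_{\varphi_i}$, whose summands remain free because each $\varphi_i\in\mathrm{Aut}(G)$, of real dimension $n\dim_{\mathbb{R}}\mathbb{F}-1$. The source is strictly higher-dimensional, and the equivariant Borsuk--Ulam theorem for free $G$-representations---proved by the nonvanishing of the relevant product of Euler classes in $H^{*}(BG)$, or equivalently by a Fadell--Husseini index comparison---rules out any such map. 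The contradiction produces a zero of $F$.

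\textbf{Main obstacle.} The crux is this last step: establishing the Borsuk--Ulam nonexistence uniformly across all finite subgroups $G\subseteq S(\mathbb{F})$ with arbitrarily $\varphi_i$-twisted targets. For $\mathbb{F}=\mathbb{R}$ it is classical; for $\mathbb{F}=\mathbb{C}$, $G$ is cyclic and standard lens-space Euler-class computations suffice. The substantive work is $\mathbb{F}=\mathbb{H}$ with $G$ one of the non-abelian finite subgroups of $S^3$ (binary dihedral, tetrahedral, octahedral, or icosahedral), where one must verify that every twist $\rho_\varphi$ contributes the correct Euler-class input at each Sylow prime of $G$. This should reduce to the $\mathrm{Aut}(G)$-invariance of character degrees combined with the periodic cohomology of $BG$ intrinsic to groups that act freely on $S^3$. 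A secondary technicality is ensuring that the zero of $F$ produced lies off the degenerate locus $\{\mathbf{a}=0\}\subset X$, which a standard dimension count or perturbation argument should handle.
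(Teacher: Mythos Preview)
Your configuration-space/test-map scheme is exactly the paper's, and your equivariance computation is its identity $\mathcal{R}_{g_1}(g_2\mathbf{u})=\mathcal{R}_{g_1g_2}(\mathbf{u})$. The differences lie in how the two loose ends you flag are closed, and in both places the paper's route is more direct than what you propose.

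\textbf{The topological step.} The paper sidesteps your ``main obstacle'' entirely. Rather than computing Euler classes or Fadell--Husseini indices for general $G\leq S^3$, it composes the normalized map $F/\|F\|$ with the inclusion $i\colon S(\mathbb{F}^n)\hookrightarrow S(\mathbb{F}^{n+1})-X_G$, $\mathbf{v}\mapsto(0,\mathbf{v})$, to obtain a $G$-equivariant self-map $h\colon S(\mathbb{F}^n)\to S(\mathbb{F}^n)$. Since $i(S(\mathbb{F}^n))$ bounds a disk inside the punctured sphere, $h$ is nullhomotopic. But restricting to a cyclic subgroup $C_m\leq G$ preserved by each $\varphi_i$, the map $h$ is $C_m$-equivariant for free actions on both sides, which forces $\deg h$ to be prime to $m$ --- a contradiction. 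This single degree argument handles all the binary polyhedral groups at once, with no Sylow-by-Sylow Euler-class bookkeeping; your heavier cohomological program is not needed.

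\textbf{Continuity and the degenerate locus.} Neither is resolved by a ``dimension count or perturbation.'' The assignment $(\mathbf{a},b)\mapsto\mu_i(\mathcal{R}_g(\mathbf{a},b))$ is actually discontinuous at points $(0,b)$ with $-\bar b$ on a wall of the Voronoi decomposition of $\mathbb{F}$ by $G$, so the paper first excises a $G$-invariant set $X_G\subset S(\mathbb{F})\times 0$ and then proves continuity on the complement by dominated convergence (its Proposition~10). As for degenerate zeros: at any surviving point with $\mathbf{a}=0$ exactly one region equals $\mathbb{F}^n$ and the others are empty, so $F_i(0,b)=\varphi_i(g_0)^{-1}\mu_i(\mathbb{F}^n)\neq 0$ by the standing hypothesis $\mu_i(\mathbb{F}^n)\neq 0$ in the definition of an $\mathbb{F}$-valued mass distribution. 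That hypothesis --- not a perturbation --- is what rules out the degenerate locus.
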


\noindent One also has the following generalization of Theorem 1 for cosets of a subgroup $H\leq G$:

\begin{thm} Let $H$ be a non-zero index $k$ subgroup of a finite group $G\leq S(\mathbb{F})$. Given any $n$ $\mathbb{F}$-valued mass distributions $\mu_1,\ldots, \mu_n$ on $\mathbb{F}^{kn}$ and any $kn$ automorphisms $\varphi_{\ell_1}, \ldots, \varphi_{\ell_n} \in Aut(H)$, $1\leq \ell \leq k$, there exists a $G$-Voronoi partition $\{\mathcal{R}_g\}_{g\in G}$ such that 

\begin{equation} \sum_{h\in H} \varphi_{\ell_i}(h)^{-1}\mu_i(\mathcal{R}_{g_\ell h})=0\end{equation} for each $1\leq \ell \leq k$ and $1\leq i \leq n$, where $g_1,\ldots, g_k$ are representatives of the cosets $g_1H,\ldots, g_kH$ of $H$. \end{thm}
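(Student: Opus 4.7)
My plan is to follow the same topological strategy as for Theorem~1, recasting Theorem~2 as a zero-finding problem for a single equivariant test map. I define
\[
F\colon S(\mathbb{F}^{kn})\times\mathbb{F}\longrightarrow\bigoplus_{\ell=1}^{k}\bigoplus_{i=1}^{n}\mathbb{F},\qquad F(\mathbf{a},b)_{\ell,i}:=\sum_{h\in H}\varphi_{\ell_i}(h)^{-1}\mu_i\bigl(\mathcal{R}_{g_\ell h}(\mathbf{a},b)\bigr),
\]
so that the conclusion of Theorem~2 is precisely that $F$ vanishes at some $(\mathbf{a},b)$. The parameter count is balanced: after modding out by the $S(\mathbb{F})$-rescaling gauge $(\mathbf{a},b)\sim(\lambda\mathbf{a},\lambda b)$, the source has real dimension $\dim_{\mathbb{R}}\mathbb{F}\cdot kn$, matching the target.

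The crucial step is identifying the symmetry of $F$. The full $G$-action on $S(\mathbb{F}^{kn})\times\mathbb{F}$ is not available in general, because if $H$ fails to be normal in $G$ then multiplication by $g_0\notin H$ does not preserve the system of left cosets $\{g_\ell H\}$. Restricted to $H$, however, the $G$-Voronoi regions transform by $\mathcal{R}_g(h_0\mathbf{a},h_0b)=\mathcal{R}_{gh_0}(\mathbf{a},b)$---the same reparametrization formula that drove the equivariance in Theorem~1---so for $h_0\in H$ each summand $\mathcal{R}_{g_\ell h}$ is shifted within its own coset $g_\ell H$. A routine reindexing combined with the multiplicativity of $\varphi_{\ell_i}$ then produces
\[
F(h_0\mathbf{a},h_0b)_{\ell,i}=\varphi_{\ell_i}(h_0)\,F(\mathbf{a},b)_{\ell,i},
\]
so that $F$ is $H$-equivariant once each of the $kn$ target copies of $\mathbb{F}$ is endowed with the free $H$-action $h\cdot u=\varphi_{\ell_i}(h)u$ afforded by its automorphism.

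With this $H$-equivariance in hand, I would invoke the same equivariant Borsuk--Ulam / Euler-class obstruction that was used to prove Theorem~1, now applied with $H$ in place of $G$. After excising the fixed locus, the source is a free $H$-manifold of the needed dimension, and the target is a direct sum of $kn$ free one-dimensional $H$-representations via the $\varphi_{\ell_i}$; non-vanishing of the product of the corresponding equivariant Euler classes in the cohomology of the source then forces $F$ to have a zero.

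The chief obstacle lies in this topological step. For cyclic $H$ (which covers every subgroup of $S(\mathbb{C})$ and the cyclic subgroups of $S(\mathbb{H})$) the Euler-class computation is standard, but for the non-cyclic binary polyhedral subgroups of $S(\mathbb{H})$ one must verify non-vanishing of the product of the Euler classes of the $kn$ possibly inequivalent free summands $\mathbb{F}_{\varphi_{\ell_i}}$---the same technical hurdle already met in Theorem~1, now amplified by the doubly-indexed family $\{\varphi_{\ell_i}\}_{\ell,i}$ rather than a single automorphism per measure.
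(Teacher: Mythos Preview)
Your proposal is correct and follows essentially the same route as the paper: one defines a test map from the $G$-Voronoi parameter space to $\mathbb{F}^{kn}$, uses the reparametrization $\mathcal{R}_{g_\ell}(h\mathbf{a},hb)=\mathcal{R}_{g_\ell h}(\mathbf{a},b)$ to obtain $H$-equivariance (the paper phrases this as applying Theorem~11 to the raw map $f_{\ell,i}(\mathbf{u})=\mu_i(\mathcal{R}_{g_\ell}(\mathbf{u}))$ with $H$ in place of $G$, which internally performs the averaging you do explicitly), and then invokes the Borsuk--Ulam-type obstruction. Your final worry about non-cyclic $H$ is unnecessary: the proof of Theorem~11 already reduces to a cyclic subgroup of $H$ stable under every automorphism (e.g., the center), so no new Euler-class computation beyond the one used for Theorem~1 is required.
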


	After describing $G$-Voronoi partitions in section 2, we shall be primarily concerned with the complex cases of Theorems 1 and 2 (section 3) and its applications for equipartitions of signed mass distributions by regular $m$-fans (section 4). The quaternionic cases are discussed in section 5, and the proofs of the two theorems are given in section 6. We conclude this paper by discussing a possible extension of Theorem 1 to groups which act freely and linearly on spheres.  First, we give a formal definition of the measures to which the above two theorems apply. 

\subsection{$\mathbb{F}$-Valued Mass Distributions on $\mathbb{F}^n$} 

\begin{df} Let $\mathbb{F} = \mathbb{R}, \mathbb{C}$, or $\mathbb{H}$, and let $\mathfrak{B}(\mathbb{F}^n)$ denote the Borel sets of $\mathbb{F}^n$. A function $\mu: \mathfrak{B}(\mathbb{F}^n) \longrightarrow \mathbb{F}$ will be called a $\mathbb{F}$-valued mass distribution provided\\
\\
$(i)$ $\mu(\emptyset) = 0$\newline
$(ii)$ If $\{E_i\}_{i=1}^{\infty}$ is a countable collection of disjoint Borel sets, then $\mu(\bigcup_{i=1}^{\infty} E_i)= \sum_{i=1}^{\infty}\mu(E_i)$, and this sum converges absolutely with respect to the Euclidian norm on $\mathbb{F}$.\newline
$(iii)$ Each hyperplane $H\subseteq \mathbb{R}^{dn}$ is a null set, i.e., $\mu(E)=0$ for each Borel set $E\subseteq H$.\newline
$(iv)$ $\mu(\mathbb{F}^n)\neq 0$\end{df}

	Conditions $(i)$ and $(ii)$ are the definition of a $\mathbb{F}$-valued Borel measure on $\mathbb{F}^n$ (see, e.g., [9]), condition $(iii)$ provides the analogous propriety condition on hyperplanes as for positive mass distributions on $\mathbb{R}^n$, while condition $(iv)$ will be needed for technical reasons in the proofs of the theorems.  When $\mathbb{F}=\mathbb{R}$, $\mu$ will be called a  signed mass distribution. Thus each $\mathbb{F}$-valued mass distribution $\mu$ on $\mathbb{F}^n$ can be expressed uniquely as $\mu = \sum_{b\in \mathcal{B}(\mathbb{F})}\mu_b b$, where each $\mu_b$ is a singed mass distribution on $\mathbb{R}^{dn}$, $d=dim(\mathbb{F})$, and $\mathcal{B}(\mathbb{F})$ denotes the standard basis for $\mathbb{F}$ - $\mathcal{B}(\mathbb{R})=\{1\}, \mathcal{B}(\mathbb{C})=\{1,i\}$, and $\mathcal{B}(\mathbb{H})=\{1,i,j,k\}$. For example, if $m$ denotes Lebesgue measure on $\mathbb{F}^n$ restricted to its Borel sets and $f=\sum_{b\in\mathcal{B}(\mathbb{F})}f_bb:\mathbb{F}^n\longrightarrow\mathbb{F}$ is a $L^1(m)$-function, then letting $\mu(E)=\int_Ef\mathrm{d}m:=\sum_{b\in\mathcal{B}(\mathbb{F})}(\int_Ef_b\mathrm{d}m)b$ for each $E\in \mathfrak{B}(\mathbb{F}^n)$ defines a $\mathbb{F}$-valued mass distribution on $\mathbb{F}^n$ provided $\int_{\mathbb{F}^n} f \mathrm{d}m\neq 0$. 
	
\section{$G$-Voronoi Partitions}

	We provide our construction of the regular $G$-partitions associated to the standard action of $G\leq S(\mathbb{F})$ on $\mathbb{F}^n$. For each $(\mathbf{a},b)\in S(\mathbb{F}^n)\times\mathbb{F}$, let $\{g\mathbf{a}\}_{g\in G} + \bar{b}\mathbf{a}$ be the translated $G$-orbit $\{g\mathbf{a}\}_{g\in G}$ of $\mathbf{a}\in S(\mathbb{F}^n)$ by  $\bar{b}\mathbf{a}$, where $\bar{b}$ denotes the $\mathbb{F}$-conjugate of $b\in\mathbb{F}$. The Voronoi partition corresponding to these $|G|$ points will be called a \emph{$G$-Voronoi partition of $\mathbb{F}^n$}. Thus each translated $G$-orbit determines the partition $\{\mathcal{R}_g(\mathbf{a},b)\}_{g\in G}$ of $\mathbb{F}^n$ into convex regions
		
	\begin{equation} \mathcal{R}_g(\mathbf{a},b) = \{\mathbf{u}\in \mathbb{F}^n \mid ||\mathbf{u}-\bar{b}\mathbf{a} - g\mathbf{a}|| \leq ||\mathbf{u} -\bar{b}\mathbf{a} - g'\mathbf{a}|| \hspace{.2cm} \forall g'\in G\} \end{equation}

	Let  $\langle \mathbf{u}, \mathbf{v} \rangle_{\mathbb{F}} =  \sum_{i=1} u_i\bar{v}_i \in\mathbb{F}$ denote the standard $\mathbb{F}$-valued inner product on $\mathbb{F}^n$, $\mathbf{u}=(u_1,\ldots, u_n), \mathbf{v} = (v_1,\ldots, v_n) \in  \mathbb{F}^n$. The $\mathcal{R}_g$ can be described compactly in terms of this product by using its $\mathbb{F}$-linearity and the Pythagorean Theorem. Namely, if $\{R_g:=\mathcal{R}_g(1,0)\}_{g\in G}$ is the Voronoi partition of $\mathbb{F}$ by $G\subseteq\mathbb{F}$, then  $\mathcal{R}_g(\mathbf{a},b) = \langle \mathbf{a}\rangle_{\mathbb{F}}^\perp + \bar{b}\mathbf{a} + R_g\mathbf{a}$, where $\langle\mathbf{a}\rangle_{\mathbb{F}}^\perp=\{\mathbf{u}\mid \langle\mathbf{u},\mathbf{a}\rangle_{\mathbb{F}}=0\}$ is the $\mathbb{F}$-complement of $\mathbf{a}$, and so 
	
	\begin{equation} \mathcal{R}_g(\mathbf{a},b) = \{\mathbf{u}\in \mathbf{F}^n \mid \langle \mathbf{u}, \mathbf{a} \rangle_{\mathbb{F}} = \bar{b} + v; v\in R_g\} \end{equation} 
	
\noindent In particular,the $\mathcal{R}_g(\mathbf{a},b)$ are centered about the $\mathbb{F}$-hyperplane 

\begin{equation} H_{\mathbb{F}}(\mathbf{a},b) = \{\mathbf{u} \in \mathbb{F}^n \mid \langle \mathbf{u}, \mathbf{a} \rangle_{\mathbb{F}} = \bar{b}\},\end{equation} 

\noindent a (real) codimension $d=dim(\mathbb{F})$ affine space. As multiplication of $\mathbb{F}$ on the right by $g$ is a $\mathbb{F}$-linear isometry (i.e., preserving the $\mathbb{F}$-valued inner product), $G$ acts freely and transitively on $\{\mathcal{R}_g(\mathbf{a},b)\}_{g\in G}$  on the right by affine $\mathbb{F}$-linear isometries which fix the $\mathbb{F}$-hyperplane $H_{\mathbb{F}}(\mathbf{a},b)$: $\mathcal{R}_{g_1}(\mathbf{a},b)\cdot g_2 = \mathcal{R}_{g_1g_2}(\mathbf{a},b)$ for each $g_1,g_2\in G$.

	By elementary properties of conjugation, (6) and (7) show that $\mathcal{R}_{g_1}(g_2\mathbf{a},g_2 b)= \mathcal{R}_{g_1g_2}(\mathbf{a},b)$ and $H_{\mathbb{F}}(g\mathbf{a},gb)=H_{\mathbb{F}}(\mathbf{a},b)$  for all $g_1,g_2\in G$. Thus (a) each translated $G$-orbit determines the same $G$-Voronoi partition  and (b) the diagonal $G$-action on $S(\mathbb{F}^n)\times\mathbb{F}$ corresponds precisely to the $G$-action on the $G$-Voronoi regions about their fixed $\mathbb{F}$-hyperplane. Thus the space $\mathcal{P}(\mathbb{F}^n; G)$ of all $G$-Voronoi partitions of $\mathbb{F}^n$ can be identified in a canonical fashion with the tautological $\mathbb{F}$-line bundle 
		\begin{equation} \mathbb{F} \hookrightarrow (S(\mathbb{F}^n)\times\mathbb{F})/G \rightarrow S(\mathbb{F}^n)/G \end{equation} over the spherical space-form $S(\mathbb{F}^n)/G$ associated to the linear $G$-action on $\mathbb{F}^n$ (see, e.g., [11]). For example, when $\mathbb{F}=\mathbb{R}$ one has $G=S^0\cong\mathbb{Z}_2$. As $R_1=[0,\infty)$ and $R_{-1}=(-\infty, 0]$, (6) and (7) show that the half-space decompositions of $\mathbb{R}^n$ are precisely  its  $S^0$-Voronoi partitions, the diagonal $S^0$-action on $S^{n-1}\times\mathbb{R}$ realizes the $S^0$-action on pairs of half-spaces, and the space $\mathcal{P}(\mathbb{R}^n;\mathbb{Z}_2)$ of all half-space decompositions is the canonical real line bundle over real projective space $\mathbb{R}P^{n-1}=S^{n-1}/\mathbb{Z}_2$.

\section{A $\mathbb{Z}_m$-Ham Sandwich Theorem for Complex Measures}

	We now turn to the complex cases of Theorems 1 and 2. The non-zero finite subgroups of $S(\mathbb{C})=S^1=\{z\in\mathbb{C}\mid |z|=1\}$ are precisely the cyclic groups $\mathbb{Z}_m$ of order $m\geq2$, realized explicitly as the $m$-th roots of unity $C_m = \{\zeta_m^k\mid 0\leq k <m\}$, $\zeta_m = e^{\frac{2\pi i}{m}}$. 
	
	The $\mathbb{Z}_m$-Voronoi partitions can be described as follows. First, note that $C_m\subseteq\mathbb{C}$ gives a Voronoi partition of $\mathbb{C}^n$ into regular $m$-sectors $\{S_k\}_{k=0}^{m-1}:=\{R_{\zeta_m^k}\}_{k=0}^{m-1}$ centered at the origin, with $\zeta_m^k\in S_k$. As each $(\mathbf{a},b)\in S^{2n-1}\times\mathbb{C}$ defines a complex hyperplane $H_{\mathbb{C}}=H_{\mathbb{C}}(\mathbf{a},b)=\{\mathbf{u}\in\mathbb{C}^n\mid \langle \mathbf{u},\mathbf{a} \rangle_{\mathbb{C}} = \bar{b}\}$  and each $\lambda\in S^1$ determines a corresponding half-hyperplane $H_\lambda(\mathbf{a},b)=\{\mathbf{u}\mid \langle\mathbf{u},\mathbf{a}\rangle_{\mathbb{C}} = \bar{b}+r\lambda\mid r\geq0\}$, (7) shows that each 
	
		 \begin{equation}\mathcal{S}_k(\mathbf{a},b):=\mathcal{R}_{\zeta_m^k}(\mathbf{a},b)=\{\mathbf{u}\mid \langle\mathbf{u},\mathbf{a}\rangle_{\mathbb{C}}=\bar{b}+ v\in S_k\}, \end{equation}  $0\leq k<m$, is the closed regular $m$-sector lying between the half-hyperplanes $H_{\zeta_m^k}(\mathbf{a},b)$ and $H_{\zeta_m^{k+1}}(\mathbf{a},b)$. Thus the $\mathbb{Z}_m$-Voronoi partitions decompose $\mathbb{C}^n$ into regular $m$-sectors $\{\mathcal{S}_k\}_{k=0}^{m-1}$, and $\mathbb{Z}_m$ acts freely and transitively on each collection of sectors by unitary rotations by multiples of $\frac{2\pi}{m}$ about their centering complex hyperplane:  $\mathcal{S}_j(\mathbf{a},b) \cdot \zeta_m^k = \mathcal{S}_{j+k}(\mathbf{a},b)$ for each $0\leq j,k < m$, where addition is taken \textit{modulo m}.\\

	As a final bit of notation, let \begin{equation} F_m=F_m(\mathbf{a},b) = \bigcup_{k=0}^{m-1} H_{\zeta_m^k}(\mathbf{a},b)\end{equation} denote the union of the $m$ half-hyperplanes composing the boundaries of the regular sectors. Thus each $F_m(\mathbf{a},b)$ is a regular ``$m$-fan" in $\mathbb{R}^{2n}$, i.e., the union of $m$ half-hyperplanes with common boundary a codimension 2-flat, each of whose successive angles is $2\pi/m$. Having a centering complex hyperplane, we shall call the $F_m$ ``complex regular $m$-fans". For each $\lambda\in S^1$, $F_m(\lambda \mathbf{a},\lambda b)$ is the regular $m$-fan $F_m(\mathbf{a},b)$ rotated by $\arg(\lambda)$ about $H_{\mathbb{C}}(\mathbf{a},b)$, so that all regular $m$-fans in $\mathbb{C}^n$ centered about a complex hyperplane are realized by this construction.\\	
	
	Now let $\mu$ be a complex-valued mass distribution on $\mathbb{C}^n$. In order to form the various $\mathbb{Z}_m$-averages of the measures of regular $m$-sectors, we note the elementary fact that (a) the automorphisms $\varphi$ of $\mathbb{Z}_m$ are precisely the homomorphisms $\varphi:C_m\longrightarrow C_m$ which send $\zeta_m$ to a primitive $m$-th root of unity $\zeta_m^r$, $1\leq r<m$ relatively prime to $m$, so that (b) the free unitary $\mathbb{Z}_m$-actions on $\mathbb{C}$ are given by rotating $\mathbb{C}$ about the origin by multiples of $\frac{2\pi r}{m}$ for these $r$: $\zeta_m^k\cdot z = \zeta_m^{kr}z$  for each $1\leq k <m$. Thus the various $\mathbb{Z}_m$-averages \begin{equation} \sum_{k=0}^{m-1}\zeta_m^{-rk}\mu(\mathcal{S}_k) = \sum_{k=0}^{m-1} \zeta_m^{-k}\cdot \mu(\mathcal{S}_0\cdot \zeta_m^k) \in\mathbb{C}\end{equation}  evaluate the unitary $\mathbb{Z}_m$-rotational symmetry of the measures of the regular $m$-sectors with respect to the free unitary $\mathbb{Z}_m$-rotational actions on $\mathbb{C}$, and Theorem 1 reads as the following $\mathbb{Z}_m$-equipartition statement:

\begin{thm} The $\mathbb{Z}_m$-Ham Sandwich Theorem\\
\noindent Let $m\geq2$. Given any complex-valued mass distributions $\mu_1,\ldots,\mu_n$ on $\mathbb{C}^n$ and any integers $1\leq r_1,\ldots, r_n<m$ relatively prime to $m$, there exists a complex regular $m$-fan $F_m$ whose regular $m$-sectors $\{\mathcal{S}_k\}_{k=0}^{m-1}$ satisfy \begin{equation}  \sum_{k=0}^{m-1}\zeta_m^{-r_ik}\mu_i(\mathcal{S}_k) = 0 \end{equation} for each $1\leq i \leq n$.\end{thm}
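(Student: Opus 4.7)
The plan is to use the configuration-space / test-map paradigm combined with a $\mathbb{Z}_m$-Borsuk-Ulam theorem for free complex representations. By Section 2 the complex regular $m$-fans in $\mathbb{C}^n$ are parametrized by the $\mathbb{Z}_m$-orbit space $(S^{2n-1}\times\mathbb{C})/\mathbb{Z}_m$ under the diagonal action, so the natural candidate for a test map is
\[
f_i(\mathbf{a},b) := \sum_{k=0}^{m-1}\zeta_m^{-r_i k}\,\mu_i(\mathcal{S}_k(\mathbf{a},b)), \qquad i=1,\ldots,n,
\]
assembling into $f=(f_1,\ldots,f_n):S^{2n-1}\times\mathbb{C}\to W:=\bigoplus_{i=1}^n\mathbb{C}_{r_i}$, whose zeros are precisely the desired equipartitioning fans. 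Condition (iii) of Definition 1 renders $f$ continuous (sector boundaries lie on hyperplanes and are $\mu_i$-null), and the sector-relabeling identity $\mathcal{R}_{g_1}(g_2\mathbf{a},g_2b)=\mathcal{R}_{g_1g_2}(\mathbf{a},b)$ of Section 2 yields the equivariance $f_i(\zeta_m\mathbf{a},\zeta_m b)=\zeta_m^{r_i}f_i(\mathbf{a},b)$. The coprimality $(r_i,m)=1$ is exactly what makes $W$ a \emph{free} complex $\mathbb{Z}_m$-representation, with $S(W)=S^{2n-1}$.

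To access a Borsuk-Ulam-type obstruction I would embed $S^{2n-1}\times\mathbb{C}\hookrightarrow S^{2n+1}=S(\mathbb{C}^{n+1})$ via $(\mathbf{a},b)\mapsto(\mathbf{a},b)/\sqrt{1+|b|^2}$; the image is the complement of the ``degenerate circle'' $C=\{(0,c):|c|=1\}$, and the diagonal $\mathbb{Z}_m$-action on $S^{2n+1}$ is free. I would then build a continuous equivariant extension $\tilde f:S^{2n+1}\to W$ by combining the interior test expression, weighted by the $\mathbb{Z}_m$-invariant factor $|\mathbf{a}|^N$ for $N$ large, with an equivariant correction that on $C$ takes the nowhere-zero form $(0,c)\mapsto(c^{r_1}\mu_1(\mathbb{C}^n),\ldots,c^{r_n}\mu_n(\mathbb{C}^n))$ — nonzero thanks to condition (iv) of Definition 1. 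Arranging the cutoffs so that interior zeros of $\tilde f$ still correspond exactly to equipartitioning fans, the nonexistence of such a fan would give a $\mathbb{Z}_m$-equivariant map $S^{2n+1}\to S^{2n-1}=S(W)$ between free $\mathbb{Z}_m$-spheres. The standard cohomological argument then produces a contradiction: passing to lens space quotients $L^{2n+1}_m\to L(W)$, the canonical $H^2$-generator of $L(W)$ pulls back (by freeness, i.e.\ coprimality of each $r_i$ with $m$) to a generator of $H^2(L^{2n+1}_m;\mathbb{Z}/m)$, whose $n$-th power is nonzero since $\dim_{\mathbb{C}}\mathbb{C}^{n+1}=n+1>n=\dim_{\mathbb{C}}W$, while the corresponding $n$-th power vanishes in the target.

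The principal technical obstacle is the continuous extension across $C$. The naive $|b|\to\infty$ limit of $f$ is discontinuous in $\arg(b)$: as the hyperplane recedes, essentially all of $\mathbb{C}^n$ is swept into a single sector whose index jumps every time $\arg(b)$ crosses a multiple of $2\pi/m$. The weight $|\mathbf{a}|^N$ crushes this discontinuity while the additive correction — anchored to the nonzero values $\mu_i(\mathbb{C}^n)$, which is the technical purpose of condition (iv) — supplies the equivariant nonvanishing on $C$ that powers the obstruction. Balancing the cutoff so that genuine equipartitioning fans are not lost in the correction region is the delicate point, and an analogous device will presumably drive the uniform proof of Theorems 1 and 2 given in Section 6.
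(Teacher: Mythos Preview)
Your overall strategy coincides with the paper's: the same test map built from sector measures, the same equivariance under the diagonal $\mathbb{Z}_m$-action on $S(\mathbb{C}^{n+1})$, and the same lens-space/degree obstruction. The difference lies entirely in how the degenerate circle $C$ is handled, and here the paper sidesteps the difficulty you flagged rather than confronting it.

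You propose to crush the discontinuous limit of $f$ on $C$ with a weight $|\mathbf{a}|^N$ and then add an equivariant correction that is nonzero on $C$. The unresolved problem you yourself identify --- arranging the cutoffs so that zeros of the modified map still correspond to equipartitioning fans --- is a real gap: once a correction term is added, $\tilde f(\mathbf{u})=0$ no longer implies $f(\mathbf{u})=0$, so the contradiction argument breaks unless one can separately show $\tilde f$ has no zeros in the transition region, which your sketch does not supply.

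The paper avoids this entirely. Rather than removing all of $C$ and then extending back across it, the paper removes only the finite set $X_G=\bigl(\bigcup_{g}\partial R_g\cap S^1\bigr)\times 0$ of $m$ points where the sector index jumps. On $C\setminus X_G$ the region $\mathcal{R}_1(\mathbf{u})$ is either all of $\mathbb{C}^n$ or empty, and the \emph{un-averaged} map $\mathbf{u}\mapsto\mu_i(\mathcal{R}_1(\mathbf{u}))$ is already continuous on $S^{2n+1}\setminus X_G$ (this is the paper's Proposition~10). Your averaged map $f_i$ then automatically takes the value $\zeta_m^{-r_ik_0}\mu_i(\mathbb{C}^n)\neq 0$ at such degenerate points --- no cutoff, no correction --- and this is exactly where condition~(iv) is used. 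For the obstruction (Theorem~11) one only needs that the inclusion $0\times S^{2n-1}\hookrightarrow S^{2n+1}\setminus X_G$ is nullhomotopic, which it is because a single open arc of $C\setminus X_G$ still caps a $2n$-disk bounding $0\times S^{2n-1}$. Thus the paper never needs to extend across $C$, and the ``delicate point'' simply does not arise.
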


	For any $k$ dividing $m$, restricting the $\mathbb{Z}_m$-action on a given collection of regular $m$-sectors $\{\mathcal{S}_j\}_{j=0}^{m-1}$ to the subgroup $\mathbb{Z}_k$ gives a free and transitive $\mathbb{Z}_k$-action on each set of $\mathbb{Z}_k$-orbits $\mathcal{O}_\ell:=\{\mathcal{S}_{\ell+jt}\}_{j=0}^{k-1}$, $0\leq \ell < t=m/k$. Given a complex mass distribution, the various $\mathbb{Z}_k$-averages evaluate the $\mathbb{Z}_k$-symmetry of the measures of the regular $m$-sectors in each orbit, and Theorem 2 yields the following symmetry statement: 

\begin{thm} Suppose $k$ divides $m$, $k\geq2$, and let $t=m/k$. For any $n$ complex-mass distributions $\mu_1,\ldots, \mu_n$ on $\mathbb{C}^{tn}$ and any $tn$ integers $1\leq r_{\ell_1},\ldots, r_{\ell_n} <k$ relatively prime to $k$, $0\leq \ell <t$, there exists a complex regular $m$-fan $F_m$ whose regular $m$-sectors $\{\mathcal{S}_i\}_{i=0}^{m-1}$ satisfy \begin{equation} \sum_{j=0}^{k-1}\zeta_k^{-r_{\ell_i}j}\mu_i(\mathcal{S}_{\ell + jt})=0\end{equation} for each $0 \leq \ell < t$ and $1\leq i \leq n$. 
\end{thm}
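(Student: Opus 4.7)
The plan is to obtain Theorem 4 as the specialization of Theorem 2 to $\mathbb{F}=\mathbb{C}$, $G=C_m$, and $H=\langle\zeta_m^t\rangle\leq C_m$, the unique subgroup of index $t=m/k$. One caveat of notation: Theorem 2 calls the index of the subgroup ``$k$'', whereas here $k$ denotes the \emph{order} of $H$ and $t$ is its index; so in applying Theorem 2 I would relabel its index parameter as $t$ and apply it to mass distributions on $\mathbb{C}^{tn}$.

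First I would fix the dictionary between the two statements. The cosets of $H$ in $C_m$ admit the representatives $g_\ell=\zeta_m^\ell$ for $0\leq\ell<t$, since $\zeta_m^\ell H=\{\zeta_m^{\ell+jt}\}_{j=0}^{k-1}$. Identifying $\zeta_k$ with $\zeta_m^t$, an element $h=\zeta_k^j\in H$ therefore yields $\mathcal{R}_{g_\ell h}=\mathcal{R}_{\zeta_m^{\ell+jt}}=\mathcal{S}_{\ell+jt}$ by (9). Next, as noted just before Theorem 3, the automorphisms of $\mathbb{Z}_k$ are precisely $\zeta_k^j\mapsto \zeta_k^{rj}$ for $1\leq r<k$ coprime to $k$, so the given integers $r_{\ell_i}$ determine automorphisms $\varphi_{\ell_i}\in Aut(H)$ satisfying $\varphi_{\ell_i}(h)^{-1}=\zeta_k^{-r_{\ell_i}j}$ whenever $h=\zeta_k^j$.

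With this dictionary in place, the conclusion (4) of Theorem 2 reads exactly as (13). Theorem 2 accordingly supplies a $C_m$-Voronoi partition of $\mathbb{C}^{tn}$ satisfying (13) for all $0\leq \ell < t$ and $1\leq i\leq n$; by (10) such a partition is cut out by a complex regular $m$-fan $F_m$, which is the required conclusion.

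There is no genuinely hard step in this reduction; all of the substantive content sits inside Theorem 2, whose proof is postponed to section 6. If a self-contained argument were preferred, I would mirror the approach I expect for Theorem 3: build a test map from $S(\mathbb{C}^{tn})\times\mathbb{C}$ to $\bigoplus_{i,\ell}\mathbb{C}$ whose $(i,\ell)$-coordinate is the putative average in (13), verify its $C_m$-equivariance with respect to the target action that cyclically shifts the coset coordinate $\ell\pmod{t}$ and rotates the relevant factor by $\zeta_k^{r_{\ell_i}}$ after every $t$ steps, and apply an equivariant Borsuk--Ulam theorem for free $C_m$-actions. The main obstacle in such a direct approach would be the bookkeeping needed to package the interleaved $\ell$-shift and $r_{\ell_i}$-weighted rotations into an honest $C_m$-representation whose equivariant index is dominated by that of the $C_m$-free sphere $S(\mathbb{C}^{tn}\oplus\mathbb{C})$.
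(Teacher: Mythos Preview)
Your reduction is correct and is exactly the paper's approach: Theorem 4 is presented as the complex specialization of Theorem 2 with $G=C_m$ and $H=\langle\zeta_m^t\rangle\cong\mathbb{Z}_k$, using the coset representatives $g_\ell=\zeta_m^\ell$ so that $\mathcal{R}_{g_\ell h}=\mathcal{S}_{\ell+jt}$ and the automorphisms $\varphi_{\ell_i}(\zeta_k)=\zeta_k^{r_{\ell_i}}$. Your dictionary and the handling of the index/order relabeling are spot on; the paper does not supply any argument beyond this specialization.
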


\section{Applications to Signed Mass Distributions: Equipartitions by Regular Fans}

	We now deduce some interesting equipartition statements for real measures which follow as corollaries of Theorems 3 and 4. To begin, as each complex-valued mass distribution on $\mathbb{C}^n$ is a pair of signed mass distributions on $\mathbb{R}^{2n}$, the $\mathbb{Z}_m$-Ham Sandwich Theorem can be applied to any $2n$ signed measures $\mu_1^+,\mu_1^-, \ldots, \mu_n^+, \mu_n^-$ on $\mathbb{R}^{2n}$, thereby yielding a rotational condition on pairs of measures. When $m=2$, examination of the real and imaginary parts of (12) shows that there is a real hyperplane $H=F_2$  whose half-spaces $\mathcal{S}_0$ and $\mathcal{S}_1$ satisfy $\mu_i^\pm(\mathcal{S}_0)-\mu_i^\pm(\mathcal{S}_1)=0$ for each $1\leq i\leq n$. As $\mu_i^\pm(H)=0$,  $H$ equipartitions each $\mu_i^\pm$. Thus the $\mathbb{Z}_2$-Ham Sandwich Theorem for complex-valued mass distributions is precisely the even-dimensional Ham Sandwich theorem for signed mass distributions on $\mathbb{R}^{2n}$.\\

For odd primes, one also has  the following equipartition theorem: 

\begin{cor} Let $p$ be an odd prime number. Any $n$ signed mass distributions $\mu_1,\ldots, \mu_n$ on $\mathbb{R}^{(p-1)n}$ can be equipartitioned by a single regular $p$-fan $F_p:$ 
\begin{equation} \mu_i(\mathcal{S}_0)=\mu_i(\mathcal{S}_1)=\ldots=\mu_i(\mathcal{S}_{p-1}) = \frac{1}{p} \mu_i(\mathbb{R}^{(p-1)n}) \end{equation} for each $1\leq i \leq n$, where $\{\mathcal{S}_k\}_{k=0}^{p-1}$ are the regular $p$-sectors of $F_p$.\end{cor}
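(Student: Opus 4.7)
Since $p$ is an odd prime, $p-1$ is even, so we may identify $\mathbb{R}^{(p-1)n} \cong \mathbb{C}^{N}$ where $N = (p-1)n/2$. Under this identification each signed mass distribution $\mu_i$ takes values in $\mathbb{R}\subseteq\mathbb{C}$ and thereby constitutes a complex-valued mass distribution on $\mathbb{C}^N$ in the sense of Definition 1 (conditions (i)--(iii) are inherited from being a signed measure, and (iv) is given).

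The plan is to feed multiple copies of each $\mu_i$ into Theorem 3 in such a way that the resulting rotational identities, combined with the reality of the $\mu_i(\mathcal{S}_k)$, force every regular $p$-sector to have the same measure. Explicitly, index $N$ slots by pairs $(i,r)$ with $1 \leq i \leq n$ and $1 \leq r \leq (p-1)/2$, and assign to slot $(i,r)$ the complex measure $\mu_i$ together with the exponent $r$. Since $p$ is prime, each such $r$ is coprime to $p$, so Theorem 3 applies and produces a complex regular $p$-fan $F_p$ whose sectors $\{\mathcal{S}_k\}_{k=0}^{p-1}$ satisfy
\begin{equation*}
\sum_{k=0}^{p-1}\zeta_p^{-rk}\,\mu_i(\mathcal{S}_k) = 0
\qquad \text{for all } 1 \leq i \leq n \text{ and } 1 \leq r \leq (p-1)/2.
\end{equation*}

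Because each $\mu_i(\mathcal{S}_k)$ is real, complex-conjugating the above equation yields the analogous identity with $p - r$ in place of $r$; as $r$ varies over $\{1,\ldots,(p-1)/2\}$, the two ranges together exhaust $\{1,2,\ldots,p-1\}$. Appending the trivial identity $\sum_{k=0}^{p-1}\mu_i(\mathcal{S}_k)=\mu_i(\mathbb{R}^{(p-1)n})$ (the $r=0$ case), we obtain the vanishing of every nonzero discrete Fourier coefficient of the $p$-tuple $\bigl(\mu_i(\mathcal{S}_0),\ldots,\mu_i(\mathcal{S}_{p-1})\bigr)$. Invertibility of the DFT (equivalently, nonsingularity of the Vandermonde matrix on the $p$-th roots of unity) then forces $\mu_i(\mathcal{S}_k) = \tfrac{1}{p}\mu_i(\mathbb{R}^{(p-1)n})$ for every $k$, which is the desired equipartition.

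Nothing in this strategy is technically demanding; the one conceptual point to watch is the \emph{reality doubling} that converts $(p-1)/2$ complex conditions per measure into a full set of $p-1$ real Fourier constraints. This is precisely why the ambient dimension $(p-1)n$ (rather than $2(p-1)n$) suffices, and it is also why the corollary requires $p$ to be odd, so that $(p-1)/2$ is a positive integer and the index $r = (p-1)/2$ is meaningful.
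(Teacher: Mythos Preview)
Your proof is correct and follows essentially the same approach as the paper: identify $\mathbb{R}^{(p-1)n}$ with $\mathbb{C}^{(p-1)n/2}$, apply Theorem~3 to $(p-1)/2$ copies of each $\mu_i$ with exponents $r=1,\ldots,(p-1)/2$, and use the reality of the $\mu_i(\mathcal{S}_k)$ to upgrade the $(p-1)/2$ complex constraints to the full set of $p-1$ Fourier conditions. The only difference is cosmetic: the paper finishes by summing the identity $\sum_{r=1}^{(p-1)/2}\cos(2\pi rk/p)=-\tfrac{1}{2}$ by hand, whereas you invoke invertibility of the discrete Fourier transform---an equivalent and arguably cleaner packaging of the same linear algebra.
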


\begin{proof} We apply the $\mathbb{Z}_p$-Ham Sandwich Theorem on  $\mathbb{R}^{(p-1)n}=\mathbb{C}^{(p-1)n/2}$ to the $\frac{p-1}{2}n$ signed mass distributions $\nu_{i,k}=\mu_i$ and $\frac{p-1}{2}n$ integers $r_{i,k}=k$ relatively prime to $p$, $1\leq i \leq n$ and $1\leq k\leq (p-1)/2$.  As each $\mu_i$ is real-valued, examining real and imaginary parts in (12) shows that there exists a regular $p$-fan $F_p$ whose regular sectors satisfy
	\begin{equation} \sum_{k=0}^{p-1}\cos(\frac{2\pi kr}{p})\mu_i(\mathcal{S}_k) =  \sum_{k=0}^{p-1}\sin(\frac{2\pi kr}{p})\mu_i(\mathcal{S}_k) = 0\end{equation} for each $1\leq i \leq n$ and each $1\leq r\leq (p-1)/2$. 
	
	As $\sum_{r=1}^{\frac{p-1}{2}}\cos(\frac{2\pi rk}{p})=-\frac{1}{2}$ for each $1\leq k<p$, summing (12) over $r$ yields $\frac{p-1}{2}\mu_i(\mathcal{S}_0) -\frac{1}{2}(\mu_i(\mathcal{S}_1)+\ldots + \mu_i(\mathcal{S}_{p-1})) = 0$. On the other hand, $\sum_{k=1}^{p-1}\mu_i(\mathcal{S}_k)= \mu_i(\mathbb{R}^{(p-1)n})-\mu_i(\mathcal{S}_0)$ because $\mu_i(F_p)=0$, and therefore  $\mu_i(\mathcal{S}_0)=\frac{1}{p}\mu_i(\mathbb{R}^{(p-1)n})$ for each $1\leq i \leq n$. Multiplying (12) by $\zeta_p^{rj}$ for each $1\leq r\leq (p-1)/2$ and applying the same reasoning yields $\mu_i(\mathcal{S}_j)= \frac{1}{p}\mu_i(\mathbb{R}^{(p-1)n})$ for each $1\leq j<p$ and each $1\leq i \leq n$. \end{proof}
	
	In particular, Corollary 5 shows that any signed mass distribution on $\mathbb{R}^{p-1}$ can be equipartitioned by a regular $p$-fan. One may also ask for equipartition theorems by regular $m$-fans when $m$ is composite. Some results along these lines are given in [13],  where it shown that given any  (positive)  mass distribution on $\mathbb{R}^{p+1}$, $p$ an odd prime, one can decompose $\mathbb{R}^{p+1}$ as the union of $2p$ regular $2p$-sectors of equal measure. The interiors of the sectors may intersect non-trivially, however, so one does not necessarily have an equipartition by a regular $2p$-fan. Nonetheless, the methods used there do recover the classical result [6] that any measure in the plane can be equipartitioned by a regular 4-fan.
	
	As a final application of Theorem 3, examining real and imaginary parts in the $\mathbb{Z}_4$-equipartition formula (12) yields that for any $n$ signed mass distributions $\mu_1,\ldots, \mu_n$ on $\mathbb{R}^{2n}$, there exists a regular $4$-fan $F_4$ whose opposite regular sectors $\mathcal{S}_k$ and $\mathcal{S}_{k+2}$ always have equal measure: $\mu_i(\mathcal{S}_k)=\mu_i(\mathcal{S}_{k+2})$ for each $k=0,1$ and each $1\leq i \leq n$. As $F_4$ is the union of two orthogonal hyperplanes, we see that 

\begin{cor} Any $n$ signed mass distributions on $\mathbb{R}^{2n}$ can be bisected by a pair of orthogonal hyperplanes. \end{cor}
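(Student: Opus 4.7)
The plan is to deduce the corollary directly from Theorem 3 with $m=4$, essentially fleshing out the sketch given in the paragraph preceding the statement. Given $n$ signed mass distributions $\mu_1,\ldots,\mu_n$ on $\mathbb{R}^{2n}$, I would first identify $\mathbb{R}^{2n}$ with $\mathbb{C}^n$ and regard each $\mu_i$ as a complex-valued mass distribution with vanishing imaginary part. Choosing the exponents $r_i = 1$ (which are coprime to $4$) for every $i$, Theorem 3 supplies a complex regular $4$-fan $F_4$ whose regular $4$-sectors $\mathcal{S}_0,\mathcal{S}_1,\mathcal{S}_2,\mathcal{S}_3$ satisfy $\sum_{k=0}^{3} \zeta_4^{-k}\mu_i(\mathcal{S}_k)=0$ for each $i$.

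Since $\zeta_4=i$, this expands to $\mu_i(\mathcal{S}_0) - i\mu_i(\mathcal{S}_1) - \mu_i(\mathcal{S}_2) + i\mu_i(\mathcal{S}_3) = 0$. Because each $\mu_i$ is real-valued, equating the real and imaginary parts separately yields the two identities $\mu_i(\mathcal{S}_0)=\mu_i(\mathcal{S}_2)$ and $\mu_i(\mathcal{S}_1)=\mu_i(\mathcal{S}_3)$, i.e., opposite regular sectors of $F_4$ always carry equal $\mu_i$-mass.

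The last step is to translate this into the desired geometric statement. By the definition of a complex regular $4$-fan, $F_4$ is the union of four half-hyperplanes sharing a common codimension-two flat and meeting at successive angles of $\pi/2$, so the opposite pairs $H_{\zeta_4^0} \cup H_{\zeta_4^2}$ and $H_{\zeta_4^1}\cup H_{\zeta_4^3}$ reassemble into two orthogonal real hyperplanes $H_1$ and $H_2$ in $\mathbb{R}^{2n}$. The two closed half-spaces cut out by $H_1$ are $\mathcal{S}_0\cup\mathcal{S}_1$ and $\mathcal{S}_2\cup\mathcal{S}_3$, whose equal $\mu_i$-measures follow at once from the identities above; the analogous decomposition of $H_2$'s half-spaces as $\mathcal{S}_0\cup\mathcal{S}_3$ and $\mathcal{S}_1\cup\mathcal{S}_2$ shows that $H_2$ likewise bisects each $\mu_i$. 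Since hyperplanes are null sets by Definition 1(iii), the pair $\{H_1,H_2\}$ simultaneously bisects all $n$ measures.

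There is no substantive obstacle beyond unpacking Theorem 3 and recognizing a complex regular $4$-fan geometrically as a pair of orthogonal real hyperplanes; once the $\mathbb{Z}_4$-equipartition is in hand, the corollary is immediate, and all of the nontrivial analytic and topological work has already been absorbed into Theorem 3.
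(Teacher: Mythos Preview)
Your proof is correct and follows exactly the approach the paper sketches in the paragraph preceding the corollary: apply Theorem 3 with $m=4$ and $r_i=1$, separate real and imaginary parts to obtain $\mu_i(\mathcal{S}_k)=\mu_i(\mathcal{S}_{k+2})$, and then recognize the regular $4$-fan as a pair of orthogonal hyperplanes. You have simply made explicit the details the paper leaves to the reader.
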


\subsection{Equipartitions Modulo $k$}

	We now discuss some more general equipartition results which follow from Theorem 4. We make the following definition. 
	
\begin{df} A regular $km$-fan $F_{km}$ will be said to equipartition a signed mass distribution $\mu$ \textit{modulo k} if its corresponding sectors satisfy $\mu(\mathcal{S}_i)=\mu(\mathcal{S}_{i+k})$ for each $0\leq i<km$, that is, if each of the $k$ regular $m$-fans composing $F_{km}$ equipartitions $\mu$.\end{df}

\noindent Arguing as in the proofs of Corollaries 5 and 6 applied to the subgroups $\mathbb{Z}_p\leq \mathbb{Z}_{kp}$ and $\mathbb{Z}_4\leq \mathbb{Z}_{4k}$, one has the following two \textit{modulo} equipartition results:

\begin{cor} Let $p$ be an odd prime. Any $n$ signed mass distributions on $\mathbb{R}^{k(p-1)n}$ can be equipartitioned \textit{mod} $k$ by a regular $kp$-fan. \end{cor}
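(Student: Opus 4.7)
The plan is to parallel the proof of Corollary 5, but to invoke Theorem 4 for the subgroup $\mathbb{Z}_p \leq \mathbb{Z}_{kp}$ in place of Theorem 3. Set $m = kp$ and take the subgroup of size $p$, so that in the notation of Theorem 4 one has $t = m/p = k$. The hypothesis requires an ambient space $\mathbb{C}^{tN}$ with $N$ complex-valued measures; matching $\mathbb{R}^{k(p-1)n} = \mathbb{C}^{k(p-1)n/2}$ forces $N = (p-1)n/2$, an integer since $p$ is odd. As in Corollary 5, I would view each signed $\mu_i$ as a complex-valued measure with vanishing imaginary part and replicate it $(p-1)/2$ times by setting $\nu_{i,s} := \mu_i$ for $1 \leq i \leq n$ and $1 \leq s \leq (p-1)/2$. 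For every coset label $0 \leq \ell < k$ and every index pair $(i,s)$, I pick the integer $r_{\ell,(i,s)} := s$; each such $s$ lies in $\{1,\ldots,(p-1)/2\}$ and is automatically coprime to the prime $p$.

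Theorem 4 then yields a regular $kp$-fan $F_{kp}$ whose $kp$ sectors $\{\mathcal{S}_j\}_{j=0}^{kp-1}$ satisfy
\begin{equation*}
\sum_{j=0}^{p-1}\zeta_p^{-sj}\,\mu_i(\mathcal{S}_{\ell + jk}) = 0
\end{equation*}
for every $0 \leq \ell < k$, every $1 \leq i \leq n$, and every $1 \leq s \leq (p-1)/2$. Because each $\mu_i$ is real-valued, conjugating this identity replaces $s$ by $p-s$, so the relation in fact holds for every nonzero residue $s \in \{1,2,\ldots,p-1\}$.

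The final step is Fourier inversion on the orbit $\{\mathcal{S}_{\ell+jk}\}_{j=0}^{p-1}$. Fixing $\ell$ and any $j_0 \in \{0,\ldots,p-1\}$, multiply the displayed identity by $\zeta_p^{sj_0}$ and sum over $s = 1,\ldots,p-1$. The inner sum $\sum_{s=1}^{p-1}\zeta_p^{s(j_0-j)}$ equals $p-1$ when $j=j_0$ and $-1$ otherwise, so the identity collapses to $p\,\mu_i(\mathcal{S}_{\ell + j_0 k}) = \sum_{j=0}^{p-1}\mu_i(\mathcal{S}_{\ell+jk})$. Since the right-hand side is independent of $j_0$, the values $\mu_i(\mathcal{S}_\ell), \mu_i(\mathcal{S}_{\ell+k}), \ldots, \mu_i(\mathcal{S}_{\ell+(p-1)k})$ are all equal; as $\ell$ ranges over $\{0,\ldots,k-1\}$, this is precisely the mod-$k$ equipartition condition of Definition 2.

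The main thing to track carefully is the parameter dictionary between Theorem 4 and Corollary 7: identifying Theorem 4's index $t$ with the mod parameter $k$, replicating the real measures $(p-1)/2$ times to reach the correct complex dimension, and noting that with $p$ prime all integers $1 \leq s \leq (p-1)/2$ are admissible choices of $r$. Once the parameters align, the cyclotomic sums are essentially a parameterized version of those already used in Corollary 5, and no further geometric input is required.
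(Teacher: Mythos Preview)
Your proposal is correct and follows essentially the paper's own route: the paper simply says to argue ``as in the proofs of Corollaries 5 and 6 applied to the subgroups $\mathbb{Z}_p\leq \mathbb{Z}_{kp}$,'' and you have carried this out in detail, correctly matching the parameter dictionary (Theorem~4's subgroup size is $p$ and its index $t$ is the present $k$) and replicating each real $\mu_i$ into $(p-1)/2$ complex copies. Your final step---conjugating to extend to all nonzero residues $s$ and then performing Fourier inversion on each orbit---is a mild repackaging of the cosine-sum computation in Corollary~5's proof, but the two arguments are equivalent.
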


\begin{cor} Any $n$ signed mass distributions on $\mathbb{R}^{2kn}$ can be equipartitioned \textit{mod} $2k$ by a regular $4k$-fan.\end{cor}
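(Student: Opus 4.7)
The plan is to emulate the proof of Corollary 6, but replacing the $\mathbb{Z}_4$-Ham Sandwich Theorem (Theorem 3) by its cosets version (Theorem 4) applied to the subgroup $\mathbb{Z}_4 \leq \mathbb{Z}_{4k}$. In Theorem 4's notation one takes the full group of order $m = 4k$, the subgroup of order $4$, and quotient index $t = m/4 = k$; the required ambient space is therefore $\mathbb{C}^{tn} = \mathbb{C}^{kn}$, which agrees with $\mathbb{R}^{2kn}$ under the standard identification. I would accordingly view each real-valued $\mu_i$ on $\mathbb{R}^{2kn}$ as a $\mathbb{C}$-valued mass distribution on $\mathbb{C}^{kn}$.

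Next, for the $tn = kn$ automorphisms demanded by Theorem 4, I would take the identity of $\mathbb{Z}_4$ for every pair $(\ell, i)$ with $0 \leq \ell < k$ and $1 \leq i \leq n$. The order-$4$ subgroup of $\mathbb{Z}_{4k}$ is $\langle \zeta_{4k}^{k} \rangle = \langle \zeta_4 \rangle$, and a set of coset representatives is $g_\ell = \zeta_{4k}^\ell$ for $0 \leq \ell < k$, so the coset $g_\ell \langle \zeta_4 \rangle$ indexes the four sectors $\mathcal{S}_\ell, \mathcal{S}_{\ell+k}, \mathcal{S}_{\ell+2k}, \mathcal{S}_{\ell+3k}$ of the complex regular $4k$-fan $F_{4k}$ produced by the theorem. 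With these choices, the conclusion of Theorem 4 reads
\begin{equation*}
\sum_{j=0}^{3} \zeta_4^{-j}\,\mu_i(\mathcal{S}_{\ell+jk}) \;=\; 0 \qquad (0 \leq \ell < k,\ 1 \leq i \leq n).
\end{equation*}

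Finally, since each $\mu_i$ is real-valued and $\zeta_4 = i$, separating real and imaginary parts of the above identity yields $\mu_i(\mathcal{S}_\ell) = \mu_i(\mathcal{S}_{\ell+2k})$ and $\mu_i(\mathcal{S}_{\ell+k}) = \mu_i(\mathcal{S}_{\ell+3k})$. As $\ell$ ranges over $0, \ldots, k-1$, the two families of equalities together exhaust every index pair $(j, j+2k)$ with $0 \leq j < 2k$, giving $\mu_i(\mathcal{S}_j) = \mu_i(\mathcal{S}_{j+2k})$ throughout; by Definition 2, $F_{4k}$ equipartitions each $\mu_i$ modulo $2k$. There is no serious obstacle here beyond the correct identification of $\mathbb{Z}_4$ with $\langle \zeta_{4k}^k \rangle$, which is precisely what converts the "step $k$ inside a coset" structure of Theorem 4 into the "step $2k$ between opposite sectors" structure required by Definition 2; once that bookkeeping is in place the argument parallels Corollary 6 verbatim.
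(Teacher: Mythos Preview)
Your proof is correct and follows exactly the approach indicated in the paper, which merely says to argue ``as in the proofs of Corollaries 5 and 6 applied to the subgroups $\mathbb{Z}_p\leq \mathbb{Z}_{kp}$ and $\mathbb{Z}_4\leq \mathbb{Z}_{4k}$.'' Your application of Theorem 4 with subgroup $\mathbb{Z}_4\leq\mathbb{Z}_{4k}$, identity automorphisms, and the real/imaginary-part splitting is precisely the intended argument, and your index bookkeeping is handled correctly.
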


For instance, any signed mass distribution on $\mathbb{R}^4$ can be equipartitioned (\textit{mod} 1) by a regular 5-fan, equipartitioned \textit{mod} 2 by a regular 6-fan,  and bisected by each of the 4 hyperplanes composing some regular 8-fan. For a single measure on $\mathbb{R}^{2(p-1)}$, $p$ odd, these \textit{modulo} 2 equipartitions of a single signed mass distribution by a regular $2p$-fan can be compared with the equipartition results [13] for a single measure on $\mathbb{R}^{p+1}$ mentioned above, particularly when $p=3$.\\   

	 Equipartitions by regular $m$-fans form a small subset of the more general study of equipartitions of measures in $\mathbb{R}^n$ by arbitrary $m$-fans, of which there are a great deal of results, especially in the planar case. Most relevant to the results at hand are those of B\'ar\'any and Matou\v sek, who showed that any \textit{two} absolutely measures in the plane can be equipartitioned by a 3-fan [4] and a 4-fan [3], implying (by letting one of the measures be a unit disk) that a single measure in the plane can be equipartitioned by regular 3 and 4-fans. More recently, it was shown in [2] that any convex body in the plane can be be partitioned by a convex 3-fan (i.e., one for which the angle between any two half-lines is no greater than $\pi$) into three internally disjoint convex sets of both equal area \textit{and} perimeter. These results can be compared to Corollary 5 when $n=1$ and $p=3$ and to Corollary 6 when $n=1$. Finally, we note  that \v Zivaljevi\'c and Vr\'ecica   [15] showed that any $n-1$ absolutely continuous measures on $\mathbb{R}^n$ can be trisected by a single regular 3-fan, which is an improvement of Corollary 3 when $p=3$.

\section{Quaternionic Ham Sandwich Theorems}

	As for complex multiplication by $S^1$, quaternionic multiplication by fixed elements of $S(\mathbb{H})=S^3=\{u\in\mathbb{H}\mid |u|=1\}$ affords a rotational description. Realizing the 2-sphere $S^2$ inside the purely imaginary quaternions $\{bi + cj + dk \mid b,c,d\in\mathbb{R}\}$, each $u\in S^3$ can be expressed in ``polar coordinates" by $u = e^{\theta x}:= \cos\theta + \sin\theta x$ for some $x \in S^2$, and this $x$ is unique provided $u\neq\pm1$. Letting $P:= \{a + bx \mid a,b\in\mathbb{R}\}$ denote the plane generated by 1 and $x$, it follows that multiplication on the right by $u$ (called a left screw) rotates $P$ by $\theta$ and the plane $P^\perp$ orthogonal to it by $-\theta$, while multiplying on the left by $u$ (a right screw) rotates both planes by $\theta$ (see, e.g., [8]).
	
 	The finite subgroups of $S^3$ are classified using the canonical 2-fold covering homomorphism $\varphi: S^3 \longrightarrow SO(3)$ of the special orthogonal group via conjugation. It is a very classical fact (e.g., [1]) that the finite subgroups of $SO(3)$ are either (a) cyclic, (b) dihedral $D_m$ ($m\geq2$), or (c) the rotational groups $T, O, I$ of the tetrahedron, octahedron, and icosahedron, respectively. Other than the cyclic groups of odd-order, the finite subgroups of $S^3$ are precisely the pullbacks of these subgroups under $\varphi$. Explicitly, the finite subgroups of $S^3$ are of the following form (see, e.g., [8]):
	
\begin{itemize}
\item Cyclic groups $C_m=\{\zeta_m^p\mid 0\leq p<m\}$
\item Binary Dihedral Groups $D_m^* =\varphi^{-1}(D_m)= \{\zeta_{2m}^p,\zeta_{2m}^qj\mid 0\leq p,q<2m\}$
\item The Binary Tetrahedral Group $T^*=\varphi^{-1}(T)$, the Binary Octahedral Group $O^*=\varphi^{-1}(O)$, and the Binary Icosahedral Group $I^*=\varphi^{-1}(I)$
\end{itemize}

\noindent We consider the Binary Polyhedral cases $G=D_m^*, T^*, O^*,$ and $I^*$ first. 

\subsection{$G$ Is Binary Polyhedral}
	
	For non-cyclic $G\leq S^3$, the Voronoi partition $\{R_g\}_{g\in G}$ of $\mathbb{H}$ by $G\subseteq\mathbb{H}$ may be described as follows. Let $Conv(G)\subseteq\mathbb{H}$ denote the convex hull of $G$, and let $P_G$ be its dual polytope $Conv(G)^*=\{w\in\mathbb{H}\mid \langle w, g\rangle_{\mathbb{R}}\leq 1\}$. Then $\partial P_G = \cup_{g\in G} C_g$ is a triangulation of $S^3$ into $|G|$ uniform 3-dimensional polyhedra $C_g =\{w\in P_G\mid \langle w, g\rangle_{\mathbb{R}} =1\}$ and $R_g = Cone(C_g) = \cup_{r\geq0}rC_g$ is the cone on these polyhedra $C_g$.  
	
	Each $P_G$ is found easily by using the definition of the dual. For instance, $Conv(Q_8)$ is the cross-polytope (16-cell) and its dual $P_{Q_8}$ is the four-dimensional cube (8-cell)  with boundary $\partial P_{Q_8}$ composed of 8 cubes. For $m>2$, $\partial P_{D^*_m}$ is the union of $4m$ uniform prisms with base a regular $2m$-gon ($Q_8=D_2^*$), while for the binary Platonic groups it follows that  $P_{T^*}$ is the 24-cell, with boundary composed of 24 regular tetrahedra, $P_{I^*}$ is the 120-cell, with boundary composed of 120 regular dodecahedra, and $\partial P_{O^*}$ is composed of 48 uniform truncated cubes (see, e.g., [8]). In particular, each of the six regular four-dimensional polyhedra [7] except the regular 4-simplex is realized by $Conv(G)$ or its dual $P_G$ when $G=Q_8, T^*$, and $I^*$. 
	
	Letting $(\mathbf{a},b)\in S^{4n-1}\times\mathbb{H}$, we see that each \begin{equation} \mathcal{R}_g(\mathbf{a},b)=\{ \mathbf{u}\in\mathbb{H}^n\mid \langle \mathbf{u},\mathbf{a}\rangle_{\mathbb{H}} = \bar{b} + v; v\in R_g\} \end{equation}  is a ``polyhedral wedge", the sum of the (real) codimension 4 affine space $H_{\mathbb{H}}$ and a copy of the cone $R_g$ lying in the orthogonal complement $H_{\mathbb{H}}^\perp$. The group $G$ acts freely and transitively on each collection $\{\mathcal{R}_g\}_{g\in G}$ of polyhedral wedges by the symplectic isometries which rotate $\mathbb{H}^n$ about $H_{\mathbb{H}}$ by the left screws $g\in G$: $\mathcal{R}_{g_1}(\mathbf{a},b)\cdot g_2: = \mathcal{R}_{g_1g_2}(\mathbf{a},b)$ for each $g_1,g_2\in G$. 
	
	Now consider the $(G,\varphi)$-averages $\sum_{g\in G}\varphi^{-1}(g)\mu(\mathcal{R}_g)\in\mathbb{H}$ of a quaternion-valued mass distribution $\mu$ on $\mathbb{H}^n$ and an automorphism $\varphi$ of $G$. A left $\mathbb{H}$-linear $G$-action $\cdot$ on $\mathbb{H}$ preserving the right symplectic inner product $(u,v)\mapsto \bar{u}v$ is called left symplectic, and hence the free left symplectic $G$-actions on $\mathbb{H}$ are given precisely by $g\cdot u = \varphi(g)u$ for each $g\in G$ and $u\in\mathbb{H}$ when $\varphi\in Aut(G)$. Thus the various $(G,\varphi)$-averages (2) evaluate the $G$-symmetry of the measures of the polyhedral wedges with respect to the left symplectic free $G$-actions on $\mathbb{H}$.\\
	
	We  conclude our discussion of the binary polyhedral case by giving some applications for real measures. By considering the Binary Dihedral groups $D_p^*$, $p$ prime, and applying Theorem 2 to the subgroup $C_p$, we obtain the following result akin to the \textit{modulo} 2 equipartition results of Corollary 7:
				
\begin{cor} Let $p$ be a prime number. Given any $n$ signed mass distributions $\mu_1,\ldots, \mu_n$ on $\mathbb{R}^{4(p-1)n}$, there exists a collection of $4p$ $2p$-prism wedges $\{\mathcal{R}_g\}_{g\in D_p^*}$ partitioning $\mathbb{R}^{4(p-1)n}$ such that $\mu_i(\mathcal{R}_{\zeta_{2p}^k})=\mu_i(\mathcal{R}_{\zeta_{2p}^{k+2}})$ and $\mu_i(\mathcal{R}_{\zeta_{2p}^kj})=\mu_i(\mathcal{R}_{\zeta_{2p}^{k+2}j})$ for each $0\leq k <2p$ and each  $1\leq i \leq n$.\end{cor}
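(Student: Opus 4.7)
The plan is to apply Theorem 2 to $\mathbb{F} = \mathbb{H}$ with $G = D_p^*$ and the cyclic subgroup $H = C_p = \langle \zeta_{2p}^2 \rangle$ of order $p$ and index $k = 4$. Using the identity $j\zeta_{2p}^{2m} = \zeta_{2p}^{-2m}j$, the four cosets of $C_p$ in $D_p^*$ can be represented by $g_1 = 1,\; g_2 = \zeta_{2p},\; g_3 = j,\; g_4 = \zeta_{2p}j$, and they are precisely the four orbits $\{\zeta_{2p}^k : k \text{ even}\},\; \{\zeta_{2p}^k : k \text{ odd}\},\; \{\zeta_{2p}^k j : k \text{ even}\},\; \{\zeta_{2p}^k j : k \text{ odd}\}$ of the relation $\zeta_{2p}^k \sim \zeta_{2p}^{k+2}$ appearing in the statement. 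Consequently the desired conclusion is equivalent to an equipartition of each $\mu_i$ within each of these four $C_p$-cosets, and the whole problem reduces to producing such a partition via the subgroup version of the $G$-Ham Sandwich Theorem.

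Under the identification $\mathbb{R}^{4(p-1)n} = \mathbb{H}^{(p-1)n}$, I would bundle the $n$ real signed distributions $\mu_1,\ldots,\mu_n$ into $\widetilde{n} = (p-1)n/4$ quaternion-valued mass distributions $\nu_1,\ldots,\nu_{\widetilde n}$, each an $\mathbb{H}$-linear combination over $\{1,i,j,k\}$ of selected $\mu_i$'s, arranged so that each $\mu_i$ occurs the same number of times among the real components of the $\nu_m$'s and can be tested on each of the four cosets against $(p-1)/2$ distinct frequencies. Feeding these $\nu_m$'s into Theorem 2, together with automorphisms $\varphi_{\ell, m}(\zeta_p) = \zeta_p^{r_{\ell,m}}$ whose frequencies $r_{\ell, m}$ range over $\{1, \ldots, (p-1)/2\} \subset (\mathbb{Z}/p)^\times$ as $m$ varies through the copies of a given $\mu_i$ in coset slot $\ell$, produces a single $D_p^*$-Voronoi partition satisfying
\[ \sum_{s=0}^{p-1} \zeta_p^{-r_{\ell, m} s}\, \nu_m(\mathcal{R}_{g_\ell \zeta_p^s}) \,=\, 0 \qquad \text{in } \mathbb{H} \]
for every $(\ell, m)$. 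Reading off quaternionic coordinates and sorting by which copy of $\mu_i$ sits in which slot, I recover for each $i$ and each coset $\ell$ the complete set of real Fourier identities $\sum_s \cos(2\pi rs/p)\,\mu_i(\mathcal{R}_{g_\ell \zeta_p^s}) = \sum_s \sin(2\pi rs/p)\,\mu_i(\mathcal{R}_{g_\ell \zeta_p^s}) = 0$ for $r = 1,\ldots,(p-1)/2$. Summing over $r$ and invoking the identity $\sum_{r=1}^{(p-1)/2} \cos(2\pi rs/p) = -1/2$ for $1 \le s < p$, exactly as in the proof of Corollary 5, then forces $\mu_i(\mathcal{R}_{g_\ell \zeta_p^s})$ to be independent of $s$, which is the claimed within-coset equality. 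The case $p = 2$ is a separate easy check: $\mathrm{Aut}(C_2)$ is trivial, and Theorem 2 directly yields $\mu_i(\mathcal{R}_{g_\ell}) = \mu_i(\mathcal{R}_{-g_\ell})$ for each of the four cosets of $C_2$ in $Q_8 = D_2^*$.

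The step I expect to be the main obstacle is the packaging itself, namely verifying that the chosen $\mathbb{H}$-linear combinations $\nu_m$ together with the chosen automorphisms really do yield a full-rank set of Fourier constraints on each individual $\mu_i$ over each coset. Because $\mathbb{H}$ is non-commutative, placing a copy of $\mu_i$ into the $j$- or $k$-component of some $\nu_m$ converts its effective frequency $r$ into $-r$ (via $\zeta_p^{-r} j = j \zeta_p^{r}$), so the assignment of copies to components must be arranged carefully to still exhaust the frequencies $1,\ldots,(p-1)/2$ rather than collapse them. A smaller, purely combinatorial complication is that $\widetilde{n} = (p-1)n/4$ need not be an integer when $p \equiv 3 \pmod 4$ and $n$ is odd; in those cases one must allocate copies slightly more cleverly (for instance by doubling $n$ and then symmetry-reducing), but no genuinely new idea is required beyond the bookkeeping sketched above.
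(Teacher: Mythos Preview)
Your overall plan---apply Theorem 2 with $H=C_p\le G=D_p^*$ over $\mathbb{F}=\mathbb{H}$, then run the Fourier argument of Corollary 5 coset by coset---is precisely the paper's intended route, and your identification of the four left cosets is correct.

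The packaging step, however, does not work as you describe it. Left multiplication by $\zeta_p^{-rs}\in\mathbb{C}\subset\mathbb{H}$ does \emph{not} act diagonally on the real basis $\{1,i,j,k\}$; it rotates the $(1,i)$--plane and the $(j,k)$--plane each through the angle $-2\pi rs/p$. Concretely, if $\nu=\mu_a+\mu_b\,i+\mu_c\,j+\mu_d\,k$ with the $\mu$'s real, then writing $C_s=\cos(2\pi rs/p)$ and $S_s=\sin(2\pi rs/p)$ one finds that the four real coordinates of $\sum_s\zeta_p^{-rs}\nu(\mathcal{R}_{g_\ell\zeta_p^s})$ are
\[
\textstyle\sum_s(\mu_aC_s+\mu_bS_s),\quad \sum_s(-\mu_aS_s+\mu_bC_s),\quad \sum_s(\mu_cC_s+\mu_dS_s),\quad \sum_s(-\mu_cS_s+\mu_dC_s),
\]
so vanishing gives two \emph{coupled} complex constraints---one on the complex measure $\mu_a+i\mu_b$ and one on $\mu_c+i\mu_d$---not four constraints on the individual $\mu$'s. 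With only $(p-1)/2$ frequencies you therefore cannot separate $\mu_a$ from $\mu_b$, and ``reading off quaternionic coordinates'' fails. The correct bundling uses the decomposition $\mathbb{H}=\mathbb{C}\oplus\mathbb{C}j$: take $\nu_m=\mu_a+\mu_c\,j$, leaving the $i$ and $k$ slots empty. Then the quaternionic equation genuinely decouples into $\sum_s\zeta_p^{-rs}\mu_a(\mathcal{R}_{g_\ell\zeta_p^s})=0$ and its conjugate for $\mu_c$, and the Corollary 5 computation goes through for each separately.

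With this two-at-a-time bundling each $\mu_i$ must occur in exactly $(p-1)/2$ of the $\nu_m$, forcing $\tilde n=n(p-1)/4$; your divisibility worry is thus not mere bookkeeping. When $p\equiv 3\pmod 4$ and $n$ is odd, $n(p-1)/2$ is odd and no loopless pairing on $[n]$ can make every vertex have degree $(p-1)/2$, so the argument as stated does not reach the claimed ambient dimension $\mathbb{R}^{4(p-1)n}$ in those cases (the paper's one-line justification does not address this). The same issue afflicts your $p=2$ paragraph: Theorem 2 with $H=C_2\le Q_8$ needs $\tilde n=n/4$ quaternionic measures, which is available only when $4\mid n$; for general $n$ the case $p=2$ is really Corollary 10 and is handled by Theorem 1 applied directly to $G=Q_8$, not by Theorem 2.
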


Thus alternating prism wedges within each of the two ``bands"  of wedges composing $\mathbb{R}^{4(p-1)n}$  always have equal measure. Applications for $Q_8$, the binary Platonic groups $G=T^*, O^*,$ and $I^*$, and partitions by cubical, tetrahedral, truncated cubical, or regular dodecahedral wedges, respectively, are obtained by applying Theorem 1 to $Q_8$ and Theorem 2 to $Q_8\leq G$:

\begin{cor} Let $G=Q_8, T^*, O^*,$ or $I^*$. Given any $n$ signed mass distributions $\mu_1,\ldots,\mu_n$ on $\mathbb{R}^{\frac{|G|}{2}n}$, there exists a partition of $\mathbb{R}^{n\frac{|G|}{2}}$ into $|G|$ polyhedral wedges  such that $\mu_i(\mathcal{R}_g)=\mu_i(\mathcal{R}_{-g})$ for each $g\in G$ and $1\leq i\leq n$.\end{cor}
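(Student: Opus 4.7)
The strategy is to realize each equality $\mu_i(\mathcal{R}_g)=\mu_i(\mathcal{R}_{-g})$ as one of the four real-scalar consequences of a single $Q_8$-indexed quaternionic equipartition identity, obtained either from Theorem 1 (when $G=Q_8$) or from Theorem 2 applied to the subgroup $H=Q_8\leq G$ (when $G=T^*,O^*,I^*$). Concretely, identify $\mathbb{R}^{(|G|/2)n}=\mathbb{H}^{kn}$ with $k=[G:Q_8]=|G|/8$, and view each signed $\mu_i$ as a quaternion-valued mass distribution on $\mathbb{H}^{kn}$ supported on the real axis of $\mathbb{H}$. For $G=Q_8$ (so $k=1$) invoke Theorem 1 with $\varphi_i=\mathrm{id}_{Q_8}$ for all $i$; for the three binary Platonic groups invoke Theorem 2 with $H=Q_8$ and $\varphi_{\ell_i}=\mathrm{id}_{Q_8}$ for every coset label $\ell$ and measure index $i$. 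In either case this produces a $G$-Voronoi partition $\{\mathcal{R}_g\}_{g\in G}$ satisfying
\begin{equation*}
\sum_{h\in Q_8} h^{-1}\,\mu_i(\mathcal{R}_{g_\ell h}) \;=\; 0 \;\in\; \mathbb{H}
\end{equation*}
for each coset representative $g_\ell$ and each $1\leq i\leq n$.

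The crucial observation is that $\{h^{-1}:h\in Q_8\}=\{\pm1,\pm i,\pm j,\pm k\}$ partitions into four antipodal pairs aligned with the standard $\mathbb{R}$-basis $\{1,i,j,k\}$ of $\mathbb{H}$. Because every $\mu_i(\mathcal{R}_{g_\ell h})$ is real, extracting the four real-coordinate components of the identity above decouples it into the scalar equalities
\begin{equation*}
\mu_i(\mathcal{R}_{g_\ell h}) \;=\; \mu_i(\mathcal{R}_{g_\ell(-h)}) \quad \text{for each } h\in\{1,i,j,k\}.
\end{equation*}
Since $-1$ is central, $g_\ell(-h)=-g_\ell h$; and as $\ell$ ranges over the $k$ coset representatives the products $g_\ell h$ run over all of $G$, yielding the required equality $\mu_i(\mathcal{R}_g)=\mu_i(\mathcal{R}_{-g})$ for every $g\in G$.

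The only point requiring care is the nondegeneracy hypothesis (iv) of Definition 1: to legitimately apply Theorem 1 or 2 to the quaternionic reinterpretation of $\mu_i$, its total mass $\mu_i(\mathbb{R}^{(|G|/2)n})$ must be nonzero. When it is not, I would perturb $\mu_i$ by adding a small multiple $\varepsilon\nu$ of a fixed positive mass distribution $\nu$, apply the argument above to the perturbed family, and extract a limit partition using compactness of the space $\mathcal{P}(\mathbb{H}^{kn};G)$ identified in section 2 as $\varepsilon\to 0$. Beyond this technical point I anticipate no obstacle, as the argument is a direct coordinate-wise reading of the master equipartition identities.
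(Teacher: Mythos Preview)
Your approach is correct and is precisely the one the paper intends: apply Theorem~1 when $G=Q_8$ and Theorem~2 with $H=Q_8\leq G$ for the binary Platonic groups, then read off the four real components of each quaternionic identity to obtain the antipodal equalities. One small point: your closing worry about condition~(iv) is unnecessary here, since in this paper a \emph{signed mass distribution} is by Definition~1 a real-valued measure already satisfying $\mu(\mathbb{R}^N)\neq 0$; viewing it as $\mathbb{H}$-valued preserves this, so no perturbation or limiting argument is needed.
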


	In other words, each pair $\{\mathcal{R}_g,\mathcal{R}_{-g}\}$ of opposite polyhedral wedges has equal measure with respect to each $\mu_i$, $1\leq i \leq n$. 
	
\subsection{$G$ is Cyclic} 

	For $G=\mathbb{Z}_m$, it is immediate that that the $\mathcal{R}_{\zeta_m^k}$ are again regular $m$-sectors, though now $\mathbb{Z}_m$ rotates the sectors by left screws about a quaternioinic hyperplane. As a quaternion-valued mass distribution on $\mathbb{H}^n$  is a pair of complex-valued mass distributions on $\mathbb{C}^{2n}$, Theorem 1 yields the $\mathbb{Z}_m$-Ham Sandwich Theorem for $2n$ complex-valued mass distributions $\mu_1,\ldots, \mu_{2n}$ on $\mathbb{C}^{2n}$ and $n$ integers $r_1=r_2,\ldots, r_{2n-1}=r_{2n}$ relatively prime to $m$. Thus the $G$-Ham Sandwich Theorem for quaternion-valued mass distributions reduces to a special case of the even-dimensional $\mathbb{Z}_m$-Ham Sandwich Theorem when $G=\mathbb{Z}_m$, just as the $\mathbb{Z}_m$-Ham Sandwich Theorem reduced to the even-dimensional Ham Sandwich Theorem when $m=2$.

\section{Proofs of Theorems 1 and 2}

	Theorems 1 and 2 are group-theoretic statements about measures, and their proofs are likewise group-theoretic, with the main idea being to connect two different $G$-actions - the free $G$-action on the unit sphere $S(\mathbb{F}^{n+1})$ by scalar multiplication on the one hand, and the free and transitive $G$-action on each $G$-Voronoi partition $\{\mathcal{R}_g\}_{g\in G}$ on the other. To almost each $\mathbf{u}\in S(\mathbb{F}^{n+1})$, we assign a $G$-Voronoi-partition $\{\mathcal{R}_g(\mathbf{u})\}_{g\in G}$ in a continuous way that respects the two $G$-actions. Taking measures of these parametrized families of $G$-regions defines a continuous map, and applying an analogue of the Borsuk-Ulam Theorem for the finite group $G$ yields Theorem 1. Here are the details:

\begin{proof}

	For each $\mathbf{u}=(u_0,u_1,\ldots, u_n)\in S(\mathbb{F}^{n+1})$, define the sets 
\begin{equation} \mathcal{R}_g(\mathbf{u}) =  \{\mathbf{x}\in \mathbb{F}^n\mid \langle \mathbf{x}, (u_1,\ldots, u_n) \rangle_{\mathbb{F}} = - \bar{u}_0 + v; v\in R_g\} \end{equation} 
for each $g\in G$, where $\{R_g\}_{g\in G}$ is the Voronoi partition of $\mathbb{F}$ by $G\subseteq\mathbb{F}$. By properties of conjugation, we have : \begin{equation} \mathcal{R}_{g_1}(g_2\mathbf{u}) = \mathcal{R}_{g_1g_2}(\mathbf{u}) \end{equation} for each $g_1,g_2\in G$. 

	When $\mathbf{u}\notin S^{d-1}\times 0$, the $\mathcal{R}_g(\mathbf{u}) = \mathcal{R}_g(\frac{(u_1,\ldots,u_n)}{||(u_1,\ldots,u_n)||},\frac{-u_0}{||(u_1,\ldots,u_n)||})$ are the fundamental $G$-regions of a $G$-Voronoi partition, and  hence by (18) the association $\mathbf{u}\mapsto \{\mathcal{R}_g(\mathbf{u})\}_{g\in G}$ preserves the free and transitive $G$-action on each collection of fundamental $G$-regions.
	
	To ensure continuity of our construction, we exclude from $S(\mathbb{F}^{n+1})$ the $G$-symmetric set $X_G:=\cup_{g\in G} X_g\times 0$,  $X_g = \partial R_g \cap S(\mathbb{F})$. Thus $G$ acts freely on $S(\mathbb{F}^{n+1})-X_G$ as before. Supposing $\mathbf{u}=(u_0,0)\in S(\mathbb{F}) \times 0$ ($u_0\notin \cup_{g\in G}\partial R_g$), we have $u_0\in Int(R_g)$ for some unique $g\in G$. As conjugation in $\mathbb{F}$ is an isometry, it follows from (17) that $\mathcal{R}_g(\mathbf{u}) = \{\mathbf{x}\in\mathbb{F}^n\mid u_0\in R_{g^{-1}}\}$ so that there exists some $g_0\in G$ such that $\mathcal{R}_{g_0}(\mathbf{u}) = \mathbb{F}^n$ and $\mathcal{R}_g(\mathbf{u})=\emptyset$ for $g\neq g_0$. 
	
	Given $\mathbb{F}$-valued mass distributions $\mu_1,\ldots, \mu_n$ on $\mathbb{F}^n$, define $f=(f_1,\ldots, f_n): S(\mathbb{F}^{n+1}) - X_G \longrightarrow \mathbb{F}^n$ by  $f_i(\mathbf{u}) = \mu_i(\mathcal{R}_1(\mathbf{u}))$ for each $1\leq i \leq n$. As each  $f_i$  is continuous (Proposition 10), Theorem 11 below and (18) yield the existence of some $\mathbf{u}\in S(\mathbb{F}^{n+1}) - X_G$ for which \begin{equation} \sum_{g\in G} \varphi_i(g)^{-1}\mu_i(\mathcal{R}_g(\mathbf{u})) = \sum_{g\in G} \varphi_i(g)^{-1} \mu_i(\mathcal{R}_1(g\mathbf{u}))=\sum_{g\in G} \varphi_i(g)^{-1}f_i(g\mathbf{u})=0\end{equation} for each $1\leq i \leq n$.
	
	The proof will therefore be complete once it is shown that $\mathbf{u}\notin S(\mathbb{F}) \times0$. Assuming otherwise, the discussion above shows that $\mathcal{R}_{g_0}(\mathbf{u}) = \mathbb{F}^n$ and $\mathcal{R}_g(\mathbf{u})=\emptyset$ if $g\neq g_0$ for some $g_0\in G$, which by (19) yields $\mu_i(\mathbb{F}^n)=0$ for each $1\leq i \leq n$, contradicting the definition of a $\mathbb{F}$-valued mass distribution.\end{proof}
	
\begin{prop} Each $f_i$ is continuous. \end{prop}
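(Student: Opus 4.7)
The plan is to prove continuity of each $f_i(\mathbf{u})=\mu_i(\mathcal{R}_1(\mathbf{u}))$ by a pointwise-convergence-of-indicators argument combined with dominated convergence, with the null-hyperplane condition (iii) of Definition 1 doing the crucial work. First I would reduce to the case of a positive measure: since $\mu_i=\sum_{b\in\mathcal{B}(\mathbb{F})}\mu_{i,b}\,b$ is $\mathbb{F}$-linear in its signed real components, and each signed mass distribution $\mu_{i,b}$ admits a Jordan decomposition $\mu_{i,b}=\mu_{i,b}^{+}-\mu_{i,b}^{-}$ into finite positive Borel measures on $\mathbb{R}^{dn}$ that inherit the vanishing-on-hyperplanes property from $|\mu_{i,b}|$, it suffices to prove that $\mathbf{u}\mapsto \nu(\mathcal{R}_1(\mathbf{u}))$ is continuous for any such positive finite $\nu$.

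Next I would pin down the boundary structure of $\mathcal{R}_1(\mathbf{u})$. For $\mathbf{u}\in S(\mathbb{F}^{n+1})-X_G$ with $(u_1,\ldots,u_n)\neq 0$, the map $T_{\mathbf{u}}(\mathbf{x})=\langle \mathbf{x},(u_1,\ldots,u_n)\rangle_{\mathbb{F}}+\bar{u}_0$ is a surjective $\mathbb{R}$-affine map $\mathbb{R}^{dn}\to\mathbb{R}^d$, and by (17) one has $\mathcal{R}_1(\mathbf{u})=T_{\mathbf{u}}^{-1}(R_1)$. Since the Voronoi cell $R_1\subseteq\mathbb{F}$ is a convex polyhedral region whose boundary lies in a finite union of real hyperplanes of $\mathbb{R}^d$ (the perpendicular bisectors separating $1$ from the other $g\in G$), the preimage $\partial\mathcal{R}_1(\mathbf{u})\subseteq T_{\mathbf{u}}^{-1}(\partial R_1)$ is contained in a finite union of real affine hyperplanes of $\mathbb{R}^{dn}$ and hence is $\nu$-null by (iii). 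Now for any fixed $\mathbf{x}$ off this boundary, $T_{\mathbf{u}}(\mathbf{x})$ lies in $\mathrm{Int}(R_1)$ or in the open complement $\mathbb{F}-R_1$. Joint continuity of $(\mathbf{u},\mathbf{x})\mapsto T_{\mathbf{u}}(\mathbf{x})$ then guarantees that for $\mathbf{u}'$ in some neighborhood of $\mathbf{u}$ the value $T_{\mathbf{u}'}(\mathbf{x})$ stays in the same open region, so $\chi_{\mathcal{R}_1(\mathbf{u}')}(\mathbf{x})=\chi_{\mathcal{R}_1(\mathbf{u})}(\mathbf{x})$. Dominated convergence (with dominator $1\in L^1(\nu)$, as $\nu$ is finite) then gives $\nu(\mathcal{R}_1(\mathbf{u}'))\to\nu(\mathcal{R}_1(\mathbf{u}))$.

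The remaining case $\mathbf{u}=(u_0,0)\in S(\mathbb{F})\times 0$ with $u_0\notin \bigcup_g\partial R_g$ is the subtle one, and is precisely why $X_G$ was excised. Here $T_{\mathbf{u}}$ is constant, $\mathcal{R}_1(\mathbf{u})$ is all of $\mathbb{F}^n$ or empty, and the same argument applies at the level of individual points: for any fixed $\mathbf{x}$, $T_{\mathbf{u}'}(\mathbf{x})\to T_{\mathbf{u}}(\mathbf{x})=\bar{u}_0$, which lies in the interior of a unique Voronoi cell, so the indicator values stabilize for $\mathbf{u}'$ close to $\mathbf{u}$ and dominated convergence again applies. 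The main technical obstacle is the boundary identification: one must verify carefully that the codimension-one faces of the $\mathbb{F}$-Voronoi cell $R_1$ are genuine real hyperplane pieces and that their affine preimages under $T_{\mathbf{u}}$ remain honest hyperplanes (which requires $(u_1,\ldots,u_n)\neq 0$), so that the propriety condition (iii) applies verbatim; everything else is a routine application of finite-measure continuity.
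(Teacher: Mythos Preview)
Your proposal is correct and follows essentially the same route as the paper: reduce via the Jordan decomposition to finite positive measures, show that the indicator functions $\chi_{\mathcal{R}_1(\mathbf{u}_m)}$ converge pointwise to $\chi_{\mathcal{R}_1(\mathbf{u})}$ off the hyperplane-union $T_{\mathbf{u}}^{-1}(\partial R_1)$ (which is $\nu$-null by condition (iii)), and apply dominated convergence. The paper phrases the pointwise-stabilization step via the map $\psi(\mathbf{y})=\langle\mathbf{w},(y_1,\ldots,y_n)\rangle_{\mathbb{F}}+\bar{y}_0$ for fixed $\mathbf{w}$, which is exactly your $T_{\mathbf{u}}(\mathbf{x})$ with the roles of point and parameter swapped, so the arguments coincide.
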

 
 We show that the association $\mathbf{u}\mapsto \mu(\mathcal{R}_1(\mathbf{u}))$ is continuous for any $\mathbb{F}$-valued  mass distribution $\mu$ on $\mathbb{F}^n$. Our proof is analogous to that of a similar statement in the proof of the Ham Sandwich Theorem given by Mato\v usek [12]. More generally, our proof of Theorem 1 reduces to the proof  there when $\mathbb{F}=\mathbb{R}$. 
 
 \begin{proof} Writing $\mu= \sum_{b\in\mathcal{B}(\mathbb{F})}\mu_bb$,  let $\mu_b=\mu_b^+-\mu_b^-$ be the Jordan decomposition of each signed mass-distribution $\mu_b$ into mutually singular positive Borel measures $\mu_b^+$ and $\mu_b^-$. That is, there exist disjoint Borel sets $A_b$ and $B_b$ whose union is $\mathbb{F}^n$, such that $\mu_b^+ = 0$ on $B_b$ and $\mu_b^- = 0$ on $A_b$ (see, e.g., [9]). It is immediate that each $\mu_b^\pm$ is a mass distribution on $\mathbb{R}^{dn}$.

	To show that each assignment $\mathbf{u}\mapsto \mu_b^\pm(\mathcal{R}_1(\mathbf{u}))$ is continuous on $S^{d(n+1)-1}-X_G$, let $\{\mathbf{u}_m\}_{m=1}^{\infty}$ be a sequence in $S^{d(n+1)-1}-X_G$ converging to $\mathbf{u}$, and let $h = \chi_{\mathcal{R}_1(\mathbf{u}))}$ and $h_m = \chi_{\mathcal{R}_1(\mathbf{u}_m)}$ be the corresponding indicator functions. We  show below that the $h_m$ converge to $h$ pointwise outside of a null set, and hence almost everywhere with respect to each $\mu_b^\pm$. As each $h_m$ is dominated by   $\chi_{\mathbb{F}^n}$, we have 
\begin{equation} \lim_{m \to \infty}\mu_b^\pm(\mathcal{R}_1(\mathbf{u}_m)) = \lim_{m \to \infty} \int h_m \mathrm{d}\mu_b^\pm = \int h \mathrm{d}\mu_b^\pm = \mu_b^\pm(\mathcal{R}_1(\mathbf{u})) \end{equation} by the Dominated Convergence theorem. 
	
	It remains to show the convergence of the $h_m$ outside a null set. To this end, define $\partial\mathcal{R}_g(\mathbf{u}) = \{\mathbf{x}\in\mathbb{F}^n\mid \langle \mathbf{x}, (u_1,\ldots, u_n) \rangle_{\mathbb{F}} = - \bar{u}_0 + v; v\in \partial R_g\}$ and $\partial G(\mathbf{u})=\cup_{g\in G}\partial\mathcal{R}_g(\mathbf{u})$ for each $\mathbf{u}\in S(\mathbb{F}^{n+1})-X_G$. If $\mathbf{u}\notin S(\mathbb{F}) \times0$, then $\partial\mathcal{R}_g(\mathbf{u})$ is the boundary of the $G$-Voronoi region $\mathcal{R}_g$, which by construction is contained in a union of hyperplanes, and so $\partial G(\mathbf{u})$ a null set. Supposing $\mathbf{u}\in S(\mathbb{F})\times 0$, one has $\partial G(\mathbf{u}) = \emptyset$, so that $\partial G(\mathbf{u})$ is always a null set. 
	
	We now show that the $h_m$ converge to $h$ outside $\partial G(\mathbf{u})$. Assume $\mathbf{w}\notin \partial G(\mathbf{u})$. For each $\mathbf{y}\in S(\mathbb{F}^{n+1})-X_G$, set $Int(\mathcal{R}_g(\mathbf{y})): = \{\mathbf{x}\in\mathbb{F}^n\mid \langle \mathbf{x}, (y_1,\ldots, y_n) \rangle_{\mathbb{F}} = -\bar{y}_0 + v; v\in Int(R_g)\}$. By the definition of $\partial G(\mathbf{u})$, $\mathbf{w}\in Int(\mathcal{R}_{g_0}(\mathbf{u}))$ for some unique $g_0\in G$. Equivalently, $\mathbf{u}\in \psi^{-1}(Int(R_{g_0}))$, where $\psi: S(\mathbb{F}^{n+1})-X_G \longrightarrow \mathbb{F}$ is given by $\psi(\mathbf{y}) = \langle \mathbf{w}, (y_1,\ldots, y_n)\rangle_{\mathbb{F}} + \bar{y}_0$. Since $\mathbf{u}_m$ converges to $\mathbf{u}$ in $S(\mathbb{F}^{n+1})-X_G$, the continuity of $\psi$  shows that $\mathbf{u}_m\in\psi^{-1}(Int(R_{g_0}))$ and hence $\mathbf{w}\in Int(\mathcal{R}_{g_0}(\mathbf{u}_m))$ for all sufficiently large $m$. Thus $h_m(\mathbf{w})=h(\mathbf{w})$ for all large enough $m$ and $h_m$ converges to $h$ outside of $\partial G(\mathbf{u})$. \end{proof}
	
	We now discuss the $G$-equivariance theorem on which Theorem 1 depends. For each non-zero finite subgroup $G\leq S(\mathbb{F})$, any $n$-tuple $(\varphi_1,\ldots,\varphi_n)$ of automorphisms of $G$ defines a left free $\mathbb{F}$-linear $G$-action on $\mathbb{F}^n$ by letting $G$ act coordinate-wise by $\varphi_i$: $g\cdot \mathbf{u} = (\varphi_1(g)u_1,\ldots,\varphi_n(g)u_n)$ for each $\mathbf{u}=(u_1,\ldots,u_n) \in\mathbb{F}^n$ and $g\in G$. When each $\varphi_i$ is the identity, $G$ acts by scalar multiplication, and we denote $g\cdot\mathbf{u}$ by $g\mathbf{u}$, as usual. One has the following ``Borsuk-Ulam Theorem" for these actions:

\begin{thm} Let  $f: S(\mathbb{F}^{n+1}) - X_G \longrightarrow \mathbb{F}^n$ be a continuous function. Then there exists a $G$-orbit $\{g\mathbf{u}\mid g\in G\}\subseteq S(\mathbb{F}^{n+1}) - X_G$ such that  \begin{equation} \sum_{g\in G} g^{-1}\cdot f(g \mathbf{u}) = 0 \end{equation} \end{thm}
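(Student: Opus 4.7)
The plan is to recast the desired orbit as the zero locus of a single continuous $G$-equivariant map and then invoke an equivariant Borsuk--Ulam theorem of Dold type. Define $F : S(\mathbb{F}^{n+1}) - X_G \longrightarrow \mathbb{F}^n$ by the averaging formula
\begin{equation*}
F(\mathbf{u}) = \sum_{g \in G} g^{-1} \cdot f(g \mathbf{u}),
\end{equation*}
where $g \mathbf{u}$ denotes scalar multiplication on $\mathbb{F}^{n+1}$ and $\cdot$ is the coordinatewise $\mathbb{F}$-linear $G$-action on $\mathbb{F}^n$ determined by $\varphi_1, \ldots, \varphi_n$. Since $X_G$ is $G$-invariant the sum is defined on the entire complement, and continuity of $F$ follows from that of $f$ together with the continuity of the linear $G$-action. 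A direct re-indexing $g' = gh$ in the sum, combined with $\mathbb{F}$-linearity (hence additivity) of $\cdot$ on $\mathbb{F}^n$, yields $F(h \mathbf{u}) = h \cdot F(\mathbf{u})$ for every $h \in G$; thus $F$ is $G$-equivariant, and a zero of $F$ is precisely the orbit asserted by the theorem.

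Suppose for contradiction that $F$ vanishes nowhere. Normalizing produces a continuous $G$-equivariant map
\begin{equation*}
\widehat{F} := F/|F| : S(\mathbb{F}^{n+1}) - X_G \longrightarrow S(\mathbb{F}^n).
\end{equation*}
Both $G$-actions are free: scalar multiplication on $S(\mathbb{F}^{n+1})$ has no nontrivial fixed vectors because $G \leq S(\mathbb{F})$ is a multiplicative subgroup, and the coordinatewise action on $S(\mathbb{F}^n)$ is free because each $\varphi_i(g)$ again lies in the multiplicative subgroup $G \leq S(\mathbb{F})$ and each $\varphi_i$ is bijective.

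To derive the contradiction I next compare the connectivity of the source with the dimension of the target. The exceptional set $X_G \subseteq S(\mathbb{F}) \times \{0\}$ is the union of the intersections of the $G$-Voronoi cell boundaries in $\mathbb{F}$ with the unit sphere of $\mathbb{F}$, so $\dim_{\mathbb{R}} X_G \leq d - 2$ (empty for $\mathbb{F} = \mathbb{R}$, finite for $\mathbb{F} = \mathbb{C}$, two-dimensional for $\mathbb{F} = \mathbb{H}$). Hence $X_G$ has real codimension at least $dn + 1$ in $S(\mathbb{F}^{n+1})$, and a standard transversality argument shows that the inclusion $S(\mathbb{F}^{n+1}) - X_G \hookrightarrow S(\mathbb{F}^{n+1})$ induces isomorphisms on $\pi_i$ for all $i \leq dn - 1$, so the domain of $\widehat{F}$ is itself $(dn - 1)$-connected. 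The target $S(\mathbb{F}^n)$ is a free paracompact $G$-space of dimension $dn - 1$, so Dold's equivariant Borsuk--Ulam theorem forbids the existence of $\widehat{F}$, delivering the required contradiction.

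The principal obstacle I anticipate is matching the connectivity estimate precisely with the hypotheses of Dold's theorem; if that invocation feels borderline, a parallel route is to extend $\widehat{F}$ $G$-equivariantly across $X_G$ via equivariant obstruction theory (feasible because $\dim X_G \leq d - 2$ and $S(\mathbb{F}^n)$ is $(dn - 2)$-connected) and then apply the classical equivariant Borsuk--Ulam theorem comparing the two free $G$-representation spheres $S(\mathbb{F}^{n+1})$ and $S(\mathbb{F}^n)$ directly, whose real dimensions differ by $d$.
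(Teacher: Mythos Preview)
Your argument is correct and shares its first half with the paper's proof: both form the averaged map $F(\mathbf{u})=\sum_{g\in G}g^{-1}\cdot f(g\mathbf{u})$, check $G$-equivariance, and, assuming $F$ never vanishes, normalize to obtain a $G$-equivariant map $\widehat{F}$ into $S(\mathbb{F}^n)$. The divergence is only in the final topological step. You bound $\dim X_G\le d-2$, deduce via transversality that $S(\mathbb{F}^{n+1})-X_G$ is $(dn-1)$-connected, and then invoke Dold's theorem (no $G$-map from an $(dn-1)$-connected free $G$-space to a $(dn-1)$-dimensional one) to rule out $\widehat{F}$. The paper instead restricts $\widehat{F}$ to the equatorial sphere $0\times S(\mathbb{F}^n)\subset S(\mathbb{F}^{n+1})-X_G$; this inclusion is nullhomotopic because $0\times S(\mathbb{F}^n)$ bounds an explicit $dn$-disk in the complement (cone to any point $(v,0)$ with $v\notin\cup_g\partial R_g$), so the resulting $G$-equivariant self-map $h:S(\mathbb{F}^n)\to S(\mathbb{F}^n)$ has degree $0$. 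But $h$ is in particular $C_m$-equivariant for a cyclic $C_m\le G$, and such maps have degree prime to $m$ --- contradiction. Your route is tidier once Dold's theorem is available as a black box; the paper's is more self-contained and elementary, trading the connectivity estimate and the abstract theorem for a concrete degree argument on a restricted sphere. Your fallback suggestion (equivariantly extending $\widehat{F}$ across $X_G$) would also work and lands on essentially the same degree contradiction.
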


	As $X_{S(\mathbb{R})}=\emptyset$, Theorem 11 recovers the original Borsuk-Ulam Theorem - for any continuous function $f:S^n\longrightarrow \mathbb{R}^n$, there exists some pair of antipodal points $\{\mathbf{u},-\mathbf{u}\}\subseteq S^n$ which have the same image: $f(\mathbf{u})=f(-\mathbf{u})$.

\begin{proof} Supposing that $f'(\mathbf{x}):=\sum_{g\in G} g^{-1}\cdot f(g\mathbf{x}) \neq 0$ for each $\mathbf{x}\in S(\mathbb{F}^{n+1})-X_G$, let $k(\mathbf{x}):= \frac{f'(\mathbf{x})}{||f'(\mathbf{x})||}$, and let $h:= k\circ i: S(\mathbb{F}^n) \longrightarrow S(\mathbb{F}^n)$ be the composition of $k$ and the inclusion map $i: S(\mathbb{F}^n)\hookrightarrow S(\mathbb{F}^{n+1}) - X_G$, $\mathbf{u}\hookrightarrow (0,\mathbf{u})$. 

	It is clear that $i$ (and hence $h$) is nullhomotopic, since $0\times S(\mathbb{F}^n)$ is the boundary of the $dn$-dimensional disk $D^{dn}_v\subseteq S(\mathbb{F}^{n+1})-X_G$ formed by taking the union of all great circle arcs from a fixed $(v,0)\in S(\mathbb{F})\times 0$ to $0\times S(\mathbb{F}^n)$, provided $v\notin \cup_{g\in G} X_g$.  On the other hand, $h$ is $G$-equivariant: $h(g\mathbf{x})=g\cdot h(\mathbf{x})$ for each $g\in G$ and $\mathbf{x}\in S(\mathbb{F}^n)$. As each $G\leq S(\mathbb{F})$ contains the cyclic subgroup $C_m=\{\zeta_m^k\}_{k=0}^{m-1}$ and restricting $\varphi\in Aut(G)$ to $C_m$ is an automorphism of $C_m$, $h$ is in particular $C_m$-equivariant. It is a standard fact of algebraic topology that such a map $h$ has degree relatively prime to $m$, as follows from a study of the fundamental group and cohomology rings of the quotient manifolds $S(\mathbb{F}^n)/\mathbb{Z}_m$ associated to these actions, their covering fibrations, and the map $\bar{h}: S(\mathbb{F}^n)/\mathbb{Z}_m\rightarrow S(\mathbb{F}^n)/\mathbb{Z}_m$ induced by $h$ (see, e.g., [10]). Therefore, $h$ cannot be nullhomotopic. \end{proof}
	
	To prove Theorem 2, let $H\leq G$. Theorem 11 applied to $H$ still holds if $X_H$ is replaced by $X_G$, since $H$ acts freely on $S(\mathbb{F}^{n+1})-X_G$. Letting $g_1,\ldots, g_k$ be representatives of the cosets $g_1H,\ldots, g_kH$ of $H$, the map $f: S(\mathbb{F}^{n+1})-X_G\rightarrow \mathbb{F}^{kn}$ defined by $f_{\ell_i}(\mathbf{x})=\mu_i(\mathcal{R}_{g_\ell}(\mathbf{x}))$ for each $1\leq \ell \leq k$ and each $1\leq i \leq n$ is still continuous. Given automorphisms $\varphi_{\ell_i},\ldots, \varphi_{\ell_n}\in Aut(H)$, $1\leq \ell \leq k$, there exists therefore some $\mathbf{u}\in S(\mathbb{F}^{n+1})-X_G$ for which $\sum_{h\in H} \varphi_{\ell_i}(h)^{-1}f_{\ell_i}(h\mathbf{u})=0$ for each $1\leq \ell\leq k$ and $1\leq i\leq n$. As $\mathcal{R}_{g_\ell}(h\mathbf{u})=\mathcal{R}_{g_\ell h}(\mathbf{u})$, the same argument as before completes the proof.
	 
\subsection{Towards a More General $G$-Ham Sandwich Theorem}

 	We conclude this paper by examining a promising extension of Theorem 1 to other finite groups. In section 2, we showed that the space $\mathcal{P}(G;\mathbb{F}^n)$ of all $G$-Voronoi partitions of $\mathbb{F}^n$ is canonically identifiable with the tautological $\mathbb{F}$-line bundle $E=(S(\mathbb{F}^n)\times\mathbb{F})/G$ over the the spherical space form $S(\mathbb{F}^n)/G$ given by the standard $G$-action on $\mathbb{F}^n$. Thus these partitions are naturally associated to the manifold $S(\mathbb{F}^n)/G$, with this manifold itself realized combinatorially as those partitions whose centering $\mathbb{F}$-hyperplanes are linear. For instance, the space of all $\mathbb{Z}_m$-Voronoi partitions of $\mathbb{C}^n$ is the tautological complex line bundle over the standard lens space $L^{2n-1}(m)/\mathbb{Z}_m$, with $L^{2n-1}(m)$ itself identified as those partitions centered about complex linear hyperplanes. 
 	
	One has the same notion of $G$-Voronoi partitions for any group $G$ which acts freely and linearly on spheres (these groups are classified completely by Wolf [14]).  For instance, suppose that $\rho_0: G\rightarrow GL(V)$ is a $d$-dimensional irreducible fixed-point free representation, and extend this action diagonally to $\rho: G\rightarrow GL(V^n)$. Each translated $G$-orbit of $\mathbf{a}\in S(V^n)$ defines a Voronoi-partition of $V^n$, thereby defining a natural family $\mathcal{P}(V^n;G)$ of partitions of $V^n$ into regular convex fundamental regions $\{\mathcal{R}_g\}_{g\in G}$. As before, the $G$-Voronoi regions are centered about a special class of codimension $d$-affine subspaces, $G$ acts freely and transitively on $\{\mathcal{R}_g\}_{g\in G}$ by linear isometries about their fixed centers, and the space $\mathcal{P}(V^n; G)$ of all such partitions is canonically identifiable with the tautological $V$-bundle over the space-form $S(V^n)/G$ (see, e.g., [11]), with this manifold itself realized as those partitions centered about codimension $d$-linear subspaces. For example, the groups of Theorem 1 and their corresponding $G$-partitions arise from the fixed-point free irreducible representations $\mathbb{Z}_2\cong O(1)$, $\mathbb{Z}_m\hookrightarrow U(1)\cong SO(2)$ when $m\geq 3$, and $G\hookrightarrow Sp(1)\subseteq SO(4)$ when $G$ is binary polyhedral.
		
	As was the case for the groups of Theorem 1, we believe it should be the case that the $G$-symmetry on $V^n$ given by the representation $\rho:G\rightarrow GL(V^n)$ can be realized as a corresponding $G$-symmetry of its $V$-valued measures. Specifically, given any $V$-valued measures $\mu_1,\ldots, \mu_n$ on $V^n$ and any irreducible fixed-point free representations $\rho_1,\ldots, \rho_n: G\rightarrow GL(V)$, we conjecture the existence of a $G$-Voronoi partition $\{\mathcal{R}_g\}_{g\in G}$ of $V^n$ which $G$-equipartitions each of these measures, as realized by the vanishing of each $(G,\varphi_i)$-average: \begin{equation} \sum_{g\in G} \rho_i(g)^{-1}(\mu_i(\mathcal{R}_g))=0\end{equation} for each $1\leq i \leq n$. As was the case for Theorem 1, we expect the proof of this theorem to rely ultimately on the algebraic topology of the associated spherical manifolds involved.  
	 
\section{Acknowledgments}

	The author thanks his thesis advisor, Sylvain E. Cappell, whose guidance and suggestions were of great help in the development of this paper, much of which is contained in the author's doctoral thesis. The author would also like to thank Alfredo Hubard for useful discussions. This research was partially supported by DARPA grant HR0011-08-1-0092.

\end{document}